\documentclass[a4paper]{amsart} 
\usepackage[english]{babel}
\usepackage[utf8x]{inputenc}
\usepackage[T1]{fontenc}

\usepackage[a4paper,top=3cm,bottom=2cm,left=3cm,right=3cm,marginparwidth=1.75cm]{geometry}

\usepackage{amsmath, amsthm}
\usepackage{amsfonts}
\usepackage{amssymb}
\usepackage{enumerate}
\usepackage{graphicx}
\usepackage{bm}
\usepackage[colorinlistoftodos]{todonotes}
\usepackage[colorlinks=true, allcolors=blue]{hyperref}
\usepackage{caption}

\newtheorem{theorem}{Theorem}
\theoremstyle{plain}
\newtheorem{proposition}[theorem]{Proposition}
\newtheorem{remark}[theorem]{Remark}
\newtheorem{lemma}[theorem]{Lemma}

\newtheorem{corollary}[theorem]{Corollary}
\numberwithin{theorem}{section}

\newcommand{\Q}{\mathbb{Q}}
\newcommand{\Z}{\mathbb{Z}}
\newcommand{\N}{\mathbb{N}}
\newcommand{\C}{\mathbb{C}}
\newcommand{\OK}[1]{\ensuremath{\mathcal{O}_{#1}}} 
\newcommand{\OKPlus}[1]{\ensuremath{\mathcal{O}_{#1}^+}}
\newcommand{\UK}[1]{\ensuremath{\mathcal{U}_{#1}}} 
 
\newcommand{\BQ}[2]{\ensuremath{\Q\big(\sqrt{#1}, \sqrt{#2}\big)}}

\newcommand{\Tr}[1]{\mathrm{Tr}\left(#1\right)} 
\newcommand{\norm}[1]{\mathcal{N}\left(#1\right)}


\newcommand{\A}{\alpha}
\newcommand{\B}{\beta}
\newcommand{\G}{\gamma}

\title{Universal Quadratic Forms and Indecomposables over Biquadratic Fields}
\author[M. \v{C}ech, D. Lachman, J. Svoboda, M. Tinkov\'{a}, K. Zemkov\'{a}]{Martin \v{C}ech, Dominik Lachman, Josef Svoboda, Magdal\'{e}na Tinkov\'{a}, Krist\'{y}na Zemkov\'{a}}
\address{Charles University, Faculty of Mathematics and Physics, Department of Algebra,
Sokolovsk\'{a} 83, 18600 Praha 8, Czech Republic}
\email{martinxcech@gmail.com, dominiklachman@seznam.cz, josefsvobod@gmail.com, tinkova.magdalena@gmail.com, zemk.kr@gmail.com}

\date{\today}
\subjclass[2010]{11R04, 11E12, 11E25}
\keywords{indecomposable integer, universal quadratic form, biquadratic number field}

\thanks{M. T. was supported by Czech Science Foundation (GA\v{C}R), grant 17-04703Y. M. \v{C}. and K. Z. were supported by student faculty grant. The authors were supported by the project SVV-2017-260456.   }

\begin{document}

\begin{abstract}
The aim of this article is to study (additively) indecomposable algebraic integers $\mathcal O_K$ of biquadratic number fields $K$ and universal totally positive quadratic forms with coefficients in $\mathcal O_K$.
There are given sufficient conditions for an indecomposable element of a quadratic subfield to remain indecomposable in the biquadratic number field $K$. Furthermore, estimates are proven which enable algorithmization of the method of escalation over $K$. These are used to prove, over two particular biquadratic number fields $\BQ 23$ and $\BQ{6}{19}$, a lower bound on the number of variables of a universal quadratic forms, verifying Kitaoka's conjecture.
\end{abstract}

\maketitle

\section{Introduction}

Quadratic forms have been a subject of interest to many researchers. For example a famous theorem of Lagrange states that the quadratic form $x_1^2+x_2^2+x_3^2+x_4^2$ is universal over $\Z$, i.e., that every positive integer can be expressed as a sum of four squares. Moreover, it is known that $\Z$ does not admit a ternary universal quadratic form.

The study of universal quadratic forms has been generalized to other fields than $\Q$. Siegel \cite{Si} showed that the only totally real number fields in which the sum of any number of squares is universal are $\Q$ and $\Q(\sqrt{5})$, three squares being sufficient in the latter case. Chan, Kim and Raghavan \cite{CKR} characterized ternary universal forms over $\Q(\sqrt{2})$, $\Q(\sqrt{3})$ and $\Q(\sqrt{5})$ up to equivalence and showed, that no other totally real quadratic number field admits a totally positive ternary universal form. More results can be found in \cite{De}, \cite{Sa}, \cite{Ki1} or \cite{Ki2}. A recent result of Blomer and Kala \cite{BK} shows that, for any positive integer $N$, there are infinitely many totally real quadratic number fields which do not admit any universal quadratic form with less than $N$ variables. This result was generalized by Kala and Svoboda \cite{KS} to multiquadratic fields of any fixed degree.

Closely related to universal quadratic forms are indecomposable elements, i.e., totally positive integers which cannot be written as a sum of two other totally positive integers. These are hard to be represented by a quadratic form so they often must appear as its coefficients, thus increasing the required number of variables. This idea played a crucial role in the proof of Blomer and Kala. The set of indecomposables was fully characterized over totally real quadratic fields $\Q(\sqrt{p})$ using properties of the continued fraction of $\sqrt{p}$ (see \cite{DS} or \cite{Pe}). The structure of the additive semigroup of totally positive elements in quadratic number fields was described in \cite{HK}.

We will focus on quadratic forms over totally real biquadratic fields, i.e., quadratic forms with coefficients in the ring of integers of a field of the form $\BQ{p}{q}$ with squarefree positive integers $p,q$. We will show that, under certain conditions, the indecomposable elements from the quadratic subfields of $\BQ p q$ remain indecomposable in the biquadratic field. We are going to prove the following theorem (note that the case distinction used in (b) depends on the values of $p,q\pmod 4$ and is described in the next section).

\begin{theorem}\label{Indecomposables}
Let $K=\BQ p q $ be a biquadratic number field, and let $r=\frac{pq}{\gcd(p,q)^2}$. For $k\in\{p,q,r\}$, set $M_k=\max\{u_i;\; i\text{ odd, }[u_0,\overline{u_1,u_2,\dots, u_{s-1}, u_s}] \text{ is the continued fraction of } -\overline{\omega_k} \}$.
\begin{enumerate}[(a)]
\item Let $\alpha\in\Q(\sqrt{p})$ be indecomposable. 
	\begin{itemize}
		\item If $\alpha$ is a convergent of $-\overline{\omega_p}$ and $\sqrt{r}>\sqrt{p}$, then $\alpha$ is indecomposable in $K$.
        \item If $\sqrt{r}>M_p\sqrt{p}$, then $\alpha$ is indecomposable in $K$.
    \end{itemize}

\item Let $\beta\in\Q(\sqrt{q})$ be indecomposable.
	\begin{itemize}
    	\item In the cases 1., 2., 3., $\beta$ is indecomposable in $K$.
        \item In the case 4., if $\sqrt{r}>\sqrt{q}$ and $\beta$ is a convergent of $-\overline{\omega_q}$, then $\beta$ is indecomposable in $K$.
        \item In the case 4., if $\sqrt{r}>M_q\sqrt{q}$, then $\beta$ is indecomposable in $K$.
    \end{itemize}
\item  Let $\gamma\in\Q(\sqrt{r})$ be indecomposable.
	\begin{itemize}
    	\item If $\gamma$ is a convergent of $-\overline{\omega_r}$ and $\sqrt{p}>\sqrt{r}$, then $\gamma$ is indecomposable in $K$.
        \item If $\sqrt{p}>M_r\sqrt{r}$, then $\gamma$ is indecomposable in $K$.
    \end{itemize}
\end{enumerate}
\end{theorem}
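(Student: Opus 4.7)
The plan is to argue by contradiction, uniformly across parts (a)--(c). Fix $k \in \{p, q, r\}$, set $F = \mathbb{Q}(\sqrt{k})$, and suppose $\alpha \in \mathcal{O}_F^+$ is indecomposable in $F$. Assume for contradiction that $\alpha = \beta + \gamma$ with $\beta, \gamma \in \mathcal{O}_K^+$, and let $\sigma$ generate $\mathrm{Gal}(K/F)$. Since $\sigma$ fixes $\alpha$, averaging the equation with its $\sigma$-image gives
\[
\alpha = \frac{\beta + \sigma(\beta)}{2} + \frac{\gamma + \sigma(\gamma)}{2},
\]
and both summands are totally positive elements of $F$ (as $\beta$ and $\sigma(\beta)$ are simultaneously positive under every real embedding of $K$). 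If both belong to $\mathcal{O}_F$, this is a nontrivial $\mathcal{O}_F^+$-decomposition of $\alpha$, contradicting its indecomposability in $F$; hence the interesting regime is when at least one of the two half-traces lies in $\tfrac{1}{2}\mathcal{O}_F \setminus \mathcal{O}_F$ and the non-integral parts of the two summands cancel.

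Expanding $\beta, \gamma$ in a $\mathbb{Z}$-basis of $\mathcal{O}_K$ -- whose precise shape depends on $(p \bmod 4, q \bmod 4)$, giving the four-case split described in the next section -- shows exactly when such a ``defect'' can occur. In the configurations where it cannot (namely $F = \mathbb{Q}(\sqrt{q})$ in cases 1, 2, 3), the argument concludes immediately, yielding the unconditional bullet in (b). In every other configuration -- all of (a), all of (c), and case 4 of (b) -- the defect does arise and must be controlled by hand.

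When the defect is present, the cancellation rewrites the decomposition as $\alpha = \mu + \nu$ with $\mu, \nu \in \tfrac{1}{2}\mathcal{O}_F$, both totally positive, and with matching fractional parts. Reading off the integral basis of $\mathcal{O}_K$, that fractional part picks up the third quadratic generator (built from $\sqrt{r}$), so the difference $|\mu - \mu'|$, respectively $|\nu - \nu'|$, is of size $\gtrsim \sqrt{r}$ -- larger than the natural scale $\sqrt{k}$ of $F$. Total positivity of the conjugates $\mu', \nu'$ then forces $\alpha' = \mu' + \nu'$ to be correspondingly large, and comparing this against the explicit value of $\alpha'$ coming from the continued-fraction expansion of $-\overline{\omega_k}$ produces the desired contradiction once $\sqrt{r}$ dominates $\sqrt{k}$ by the prescribed amount. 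The two bullets in each part reflect the two precision levels available: when $\alpha$ is itself a convergent of $-\overline{\omega_k}$, one controls $\alpha'$ directly and gets the cleaner $\sqrt{r} > \sqrt{k}$; for a general indecomposable (typically only a semiconvergent), one pays an extra factor equal to the maximal odd-indexed partial quotient $M_k$, producing the uniform bound $\sqrt{r} > M_k \sqrt{k}$. The main obstacle will be executing this last comparison cleanly while keeping track of the integral basis of $\mathcal{O}_K$ and the convergent recursion for $-\overline{\omega_k}$, case by case across (a)--(c).
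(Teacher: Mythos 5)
Your high-level skeleton does match the paper's: the relative-trace reduction $\alpha=\tfrac12(\beta+\sigma(\beta))+\tfrac12(\gamma+\sigma(\gamma))$ is exactly how Proposition \ref{prop:case q} disposes of $\Q(\sqrt q)$ in cases 1.--3., and the remaining bullets are indeed settled by a quantitative estimate on the $\sigma$-anti-invariant parts. But the quantitative step---which is the entire content of (a), (c) and case 4. of (b)---is misidentified, and as you describe it the argument would fail. You claim the ``defect'' produces a component of size $\gtrsim\sqrt r$, to be compared with the natural scale of $F$. That is not the right comparison. Writing (in case 1.) $\beta=a+b\sqrt p+c\sqrt q+\tfrac d2(\sqrt p+\sqrt r)$ and letting $g=\gcd(p,q)$, the anti-invariant part evaluated at a conjugate embedding is $\pm\bigl(c\sqrt q-\tfrac d2\sqrt r\bigr)=\pm\tfrac{\sqrt q}{g}\bigl(gc-\tfrac d2\sqrt p\bigr)$, and $gc-\tfrac d2\sqrt p$ is a rational approximation error that could a priori be arbitrarily small---nothing forces it to be of size $\sqrt r$. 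It must be compared with the invariant part, one of which is $\le\tfrac12(x-y\sqrt p)$, itself tiny because $x+y\sqrt p$ is a (semi)convergent. The engine that makes this work is the \emph{best approximation of the second kind} property of convergents: one first shows $|d|\le y$ (using $x-(y+1)\sqrt p<0$), and then $|gc-d\sqrt p|\ge x-y\sqrt p$ for all admissible $c,d$, so the anti-invariant part exceeds $\tfrac{\sqrt q}{g}$ times the invariant part; the hypothesis $\sqrt r>\sqrt p$ is precisely $\tfrac{\sqrt q}{g}>1$, which contradicts total positivity. Your sketch never invokes this property, and without it there is no lower bound on the anti-invariant part at all.

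Relatedly, the factor $M_k$ for general indecomposables is asserted rather than derived. A semiconvergent $\alpha_n+l\alpha_{n+1}$ is \emph{not} a best approximation of the second kind, so the lower bound $x-y\sqrt p$ must be replaced by $|x_{n+1}-y_{n+1}\sqrt p|$, and Proposition \ref{th:semi} quantifies the resulting loss via the identity $x_{n+2}-y_{n+2}\sqrt p=(x_n-y_n\sqrt p)+u_{n+2}(x_{n+1}-y_{n+1}\sqrt p)$, which is where the partial quotient $u_{n+2}\le M_p$ enters. These are the substantive steps of the proof, and they are missing from your outline.
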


Since we do not have a characterization of indecomposable integers in biquadratic fields, this theorem is the first step in understanding of their structure.

In Section 2 we have compiled some basic facts about biquadratic fields, indecomposable integers and quadratic forms over totally real fields. Section 3 presents some original results about indecomposable integers in biquadratic fields. Moreover, it provides proof that the totally positive integer of sufficiently small norms are indecomposable in these fields. We also prove the previous theorem and decide in which biquadratic fields we can eventually decompose the indecomposable integers from quadratic subfields.    

Sections 4 and 5 are devoted to using this theorem and the escalation method to find a lower bound on the number of variables of a universal quadratic form over a biquadratic field. Section 4 is preparatory, where we make necessary estimates which allow us to solve the problem algorithmically. In Section 5, two particular number field are treated, and we show that any classical universal totally positive quadratic form over $\BQ23$ ($\BQ{6}{19}$, resp.) must have at least 5 variables (6 variables, resp.); in the case of the field $\BQ23$, a detailed computation is provided. Note that these results confirm Kitaoka's conjecture that states that there are very few number fields that admit ternary universal forms.
\section{Preliminaries}

For a number field $F$, $\OK{F}$ denotes its ring of integers and $\OKPlus{F}$ is the semigroup of \emph{totally positive integers}, i.e., the elements $\alpha\in\OK{F}$ satisfying $\sigma(\alpha)>0$ for any embedding $\sigma$ of $F$ into $\C$. If $\alpha\in F$ is totally positive, we write $\alpha\succ0$, and similarly if $\beta\in F,$ we write $\alpha\succ\beta$ if $\alpha-\beta\succ0.$ The \emph{norm} and \emph{trace} of $\alpha\in F$ are defined respectively by $$\begin{aligned}\norm{\alpha}&=\prod_{\sigma}\sigma(\alpha),\\\Tr{\alpha}&=\sum_{\sigma}\sigma(\alpha),\end{aligned}$$ where $\sigma$ runs over all embeddings of $F$ into $\C$.

Throughout the article, $p$ and $q$ will denote squarefree positive integers, and the corresponding biquadratic field $\BQ{p}{q}$ will be denoted by $K$. Furthermore, fix $r=\frac{pq}{\gcd(p,q)^2}.$ Then $K$ has three quadratic subfields -- $\Q(\sqrt{p})$, $\Q(\sqrt{q})$ and $\Q(\sqrt{r})$. There are four embeddings of $K$ into $\C$: if $\alpha\in K$ is of the form $a+b\sqrt{p}+c\sqrt{q}+d\sqrt{r}$ with $a,b,c,d\in\Q$, the embeddings are the following:$$\begin{aligned}
\sigma_1(\alpha)&=a+b\sqrt{p}+c\sqrt{q}+d\sqrt{r}, \\
\sigma_2(\alpha)&=a-b\sqrt{p}+c\sqrt{q}-d\sqrt{r}, \\
\sigma_3(\alpha)&=a+b\sqrt{p}-c\sqrt{q}-d\sqrt{r}, \\
\sigma_4(\alpha)&=a-b\sqrt{p}-c\sqrt{q}+d\sqrt{r}.
\end{aligned}$$
 
Depending on $p,q\pmod{4}$, after possibly interchanging the role of $p$, $q$ and $r$, every case can be converted into one of the following (see \cite[Section 8]{Ja}):

\begin{enumerate}
\item $p\equiv 2\;(\text{mod } 4)$, $q\equiv 3\;(\text{mod } 4)$,
\item $p\equiv 2\;(\text{mod } 4)$, $q\equiv 1\;(\text{mod } 4)$,
\item $p\equiv 3\;(\text{mod } 4)$, $q\equiv 1\;(\text{mod } 4)$,
\item $p\equiv 1\;(\text{mod } 4)$, $q\equiv 1\;(\text{mod } 4)$ and
\begin{enumerate}
\item $\frac{p}{\gcd(p,q)}\equiv \frac{q}{\gcd(p,q)}\equiv1\;(\text{mod } 4)$ or
\item $\frac{p}{\gcd(p,q)}\equiv \frac{q}{\gcd(p,q)}\equiv3\;(\text{mod } 4)$.
\end{enumerate}
\end{enumerate}

We will therefore restrict ourselves to these cases whenever any specific property of the basis will be needed. An integral basis of $\OK{K}$ in the different cases has the following form (see \cite[Theorem 2]{Wi}):

\begin{enumerate}
\item $\left(1, \sqrt{p}, \sqrt{q}, \frac{\sqrt{p}+\sqrt{r}}2\right)$,
\item $\left(1, \sqrt{p}, \frac{1+\sqrt{q}}2, \frac{\sqrt{p}+\sqrt{r}}{2} \right)$,
\item $\left(1, \sqrt{p}, \frac{1+\sqrt{q}}2, \frac{\sqrt{p}+\sqrt{r}}{2}\right)$,
\item 
\begin{enumerate}
\item $\left(1, \frac{1+\sqrt{p}}2, \frac{1+\sqrt{q}}2, \frac{1+\sqrt{p}+\sqrt{q}+\sqrt{r}}{4}\right)$,
\item $\left(1, \frac{1+\sqrt{p}}2, \frac{1+\sqrt{q}}2, \frac{1-\sqrt{p}+\sqrt{q}+\sqrt{r}}{4}\right)$.
\end{enumerate}
\end{enumerate}

Note that in all cases, we have $p\equiv r\pmod{4}$ so in cases 1, 2 and 3, $p$ and $r$ are interchangeable.

An element $\alpha\in\OKPlus{K}$ is called \emph{indecomposable} if it cannot be written as $\alpha=\beta+\gamma$ with $\beta,\gamma\in\OKPlus{K}$. If $F=\Q\left(\sqrt{p}\right)$ is a quadratic field, it is possible to characterize the indecomposable elements in terms of the continued fraction of certain algebraic integer in $F$. Let us denote
\[
    \omega_p=\left\{
                \begin{array}{ll}
                  \sqrt{p} \qquad\text{if } p\equiv 2,3\;(\text{mod } 4),\\
                  \frac{1+\sqrt{p}}{2}\quad\text{if } p\equiv 1\;(\text{mod } 4).
                \end{array}
              \right.
\]
The set $\{1,\omega_p\}$ forms an integral basis of $\OK{F}$. We know that $-\overline{\omega_p}$, where $\overline{\omega_p}$ is the Galois conjugate of $\omega_p$, has an eventually periodic continued fraction of the form $[u_0,\overline{u_1,u_2,\dots, u_{s-1}, u_s}]$. By
\[
\frac{x_i}{y_i}=[u_0,\ldots,u_i],
\]
we mean the $i$th convergent of $-\overline{\omega_p}$. We will also use the expression \emph{convergent} to describe the algebraic integer $\alpha_i=x_i+y_i\omega_p$. Moreover, let us denote by $\alpha_{i,l}$ the elements $\alpha_{i,l}=\alpha_i+l\alpha_{i+1}$ with $0\leq l\leq u_{i+2}$, which we call the \emph{semiconvergents} of $-\overline{\omega_p}$. In \cite{Pe, DS}, it is proved that the indecomposable integers in $\Q(\sqrt{p})$ are exactly the totally positive semiconvergents of $-\overline{\omega_p}$, i.e., $\alpha_{i,l}$ with $i$ odd, and their conjugates.

In this article, we shall use the fact that the elements $\alpha_i$ are the best approximations of the second kind, see \cite{Kh}. It means that the convergents satisfy the inequality
\[
|x_i+y_i\overline{\omega_p}|<|u+v\overline{\omega_p}| 
\]
for all $u,v\in\N_0$ such that $1\leq v\leq y_i$ and $\frac{u}{v}\neq \frac{x_i}{y_i}$. Moreover, the only elements with this property are the convergents.

By a quadratic form over a totally real field $F$ with variables $x_1,\dots x_n,$ we mean an expression of the form $$Q(x_1,\dots,x_n)=\sum_{1\leq i\leq j\leq n}a_{ij}x_ix_j,$$ where $a_{ij}\in\OKPlus{F}.$ The quadratic form $Q(x_1,\dots,x_n)$ is called \begin{itemize}
\item \emph{totally positive definite} if $Q(\gamma_1,\dots ,\gamma_n)\succ0$ whenever $\gamma_1,\dots,\gamma_n\in\OK{F}$, not all zero,
\item \emph{diagonal} if $a_{ij}=0$ whenever $i\neq j$,
\item \emph{classical} if $a_{ij}$ is divisible by 2 whenever $i\neq j$, 
\item \emph{universal} if it represents all elements from $\OKPlus{F}$, i.e., for every $\alpha\in\OKPlus{F},$ there exist $\gamma_1,\dots \gamma_n\in\OK{F}$ such that $\alpha=\sum_{i,j=1}^na_{ij}\gamma_i\gamma_j.$
\end{itemize}

Every quadratic form $Q(x_1,\dots,x_n)$ with $n$ variables can be associated with a symmetric $n\times n$ matrix $$Q(x_1,\dots,x_n)=\begin{pmatrix}x_1 & \cdots & x_n\end{pmatrix}\cdot\begin{pmatrix}
a_{11} & \frac {a_{12}}2 & \cdots & \frac{a_{1n}}2\\
\frac{a_{12}}{2} & a_{22} & \cdots & \frac{a_{2n}}2\\
\vdots & \vdots & \ddots & \vdots \\
\frac{a_{1n}}2 & \frac{a_{2n}}2 & \cdots & a_{nn}
\end{pmatrix}\cdot\begin{pmatrix}x_1 \\ x_2 \\ \vdots \\ x_n\end{pmatrix}.$$ 

We see that the quadratic form is diagonal if and only if the associated matrix is diagonal, and it is classical if and only if all entries of the associated matrix are in $\OK{F}$. We shall often identify a quadratic form with its associated matrix.

Throughout the article, we will consider only totally positive definite classical quadratic forms.

\section{Indecomposable elements over biquadratic fields}

As we have said, the indecomposable elements in quadratic fields can be fully described by a continued fraction, and they are fairly well-studied, see e.g. \cite{DS, JK, Ka2}. On the other hand, we are not aware of any such characterization in biquadratic fields (or in any other family of number fields). Hence, to understand the indecomposable elements in biquadratic fields, the obvious starting point is to look at the indecomposables in the three quadratic subfields. The question is if these elements remain indecomposable in the bigger biquadratic field. In this section, we positively answer the question if some additional conditions are satisfied, but in some cases, the question remains open.

Recall that $K=\BQ p q$, where $p, q$ are squarefree positive integers, and that $r=\frac{pq}{\gcd(p,q)^2}$. The following lemma gives us some useful inequalities for the coefficients of totally positive elements.

\begin{lemma} \label{lemma:totpos}
Let $\alpha=a+b\sqrt{p}+c\sqrt{q}+d\sqrt{r}\in\Q(\sqrt{p},\sqrt{q})$. If $\alpha$ is totally positive, then $a>|b|\sqrt{p}$, $a>|c|\sqrt{q}$ and $a>|d|\sqrt{r}$. 
\end{lemma}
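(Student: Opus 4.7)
The plan is to exploit the fact that totally positive means all four conjugates $\sigma_1(\alpha),\sigma_2(\alpha),\sigma_3(\alpha),\sigma_4(\alpha)$ are positive, and then obtain each bound by summing an appropriate pair of conjugates so that two of the three ``irrational'' pieces cancel.

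More concretely, using the explicit formulas for the embeddings stated in the preliminaries, I would compute the three pairings
\[
\sigma_1(\alpha)+\sigma_3(\alpha)=2a+2b\sqrt{p},\qquad \sigma_2(\alpha)+\sigma_4(\alpha)=2a-2b\sqrt{p},
\]
\[
\sigma_1(\alpha)+\sigma_2(\alpha)=2a+2c\sqrt{q},\qquad \sigma_3(\alpha)+\sigma_4(\alpha)=2a-2c\sqrt{q},
\]
\[
\sigma_1(\alpha)+\sigma_4(\alpha)=2a+2d\sqrt{r},\qquad \sigma_2(\alpha)+\sigma_3(\alpha)=2a-2d\sqrt{r}.
\]
Each of the six right-hand sides is a sum of two positive numbers, so all six quantities are positive. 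Reading off the first pair gives $a>b\sqrt{p}$ and $a>-b\sqrt{p}$, hence $a>|b|\sqrt{p}$; the remaining two pairs give $a>|c|\sqrt{q}$ and $a>|d|\sqrt{r}$ in exactly the same way.

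There is no real obstacle here: the only thing to be careful about is matching the signs of $b\sqrt{p},c\sqrt{q},d\sqrt{r}$ across the four embeddings correctly so that the desired pair of coordinates cancels while the one of interest doubles. Once the pairing is identified, the conclusion is immediate from positivity.
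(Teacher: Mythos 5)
Your proposal is correct and is essentially identical to the paper's proof: the authors also pair conjugates so that two of the irrational parts cancel (explicitly $\sigma_1+\sigma_3=2a+2b\sqrt{p}$ and $\sigma_2+\sigma_4=2a-2b\sqrt{p}$ for the first inequality), and declare the other two cases analogous. You have simply written out all three pairings explicitly, which matches the intended argument.
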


\begin{proof}
We give the proof only for one of our inequalities, the other cases are similar. Since $\alpha$ is totally positive, we have $\sigma_i(\alpha)>0$ for all $i\in\{1,\ldots,4\}$. Thus
\[
0<\sigma_1(\alpha)+\sigma_3(\alpha)=2a+2b\sqrt{p}
\]
and
\[
0<\sigma_2(\alpha)+\sigma_4(\alpha)=2a-2b\sqrt{p}.
\]
Combining these two inequalities we obtain the desired conclusion. 
\end{proof} 

We proceed with a proof that elements with sufficiently small norm are indecomposable.

\begin{lemma}\label{stopa} 
Suppose that $\A = a+b\sqrt{p}+c\sqrt{q}+d\sqrt{r} \in \mathcal{O}_K^+$ (with $a, b, c, d\in\Q$). 
\begin{enumerate}[(a)]
\item If $b \neq 0$, then
$\Tr{\A} > \sqrt{p}$.
\item If $c \neq 0$, then
$\Tr{\A} > \sqrt{q}$.
\item If $d \neq 0$, then
$\Tr{\A} > \sqrt{r}$.
\end{enumerate}
Finally, if $\A \notin \mathbb{Z}$, then $\Tr{\A}> \min(\sqrt{p},\sqrt{q},\sqrt{r}).$
\end{lemma}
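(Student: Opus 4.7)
The plan is to reduce the three inequalities to the single observation $\operatorname{Tr}(\alpha)=\sigma_1(\alpha)+\sigma_2(\alpha)+\sigma_3(\alpha)+\sigma_4(\alpha)=4a$, combine this with the bounds from Lemma \ref{lemma:totpos}, and then extract from the integral basis the minimum possible absolute value of a nonzero coefficient.

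Specifically, I would first compute the trace directly from the four embeddings listed in Section 2: the contributions of $b\sqrt{p}$, $c\sqrt{q}$, $d\sqrt{r}$ each cancel out across conjugates, leaving $\operatorname{Tr}(\alpha)=4a$. By Lemma \ref{lemma:totpos}, total positivity gives $a>|b|\sqrt{p}$, $a>|c|\sqrt{q}$, and $a>|d|\sqrt{r}$, so $\operatorname{Tr}(\alpha)=4a>4|b|\sqrt{p}$ and analogously for $c$ and $d$. It will therefore suffice to show that each of $|b|,|c|,|d|$ is at least $\tfrac{1}{4}$ as soon as it is nonzero, since then $4|b|\sqrt{p}\ge\sqrt{p}$ and the strict inequality is preserved.

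The main (and really only) technical step is the case analysis of the integral basis recalled in Section 2. Writing $\alpha$ as an $\mathbb{Z}$-linear combination of the basis and reading off the coefficients $a,b,c,d$ of $1,\sqrt{p},\sqrt{q},\sqrt{r}$ in each of the cases 1, 2, 3, 4(a), 4(b), I would check:
\begin{itemize}
\item in cases 1--3, the basis involves only $\tfrac12$-denominators in the positions of $\sqrt{p},\sqrt{q},\sqrt{r}$, so $b,c,d\in\tfrac12\mathbb{Z}$;
\item in cases 4(a) and 4(b), the ``quartic'' basis element introduces $\tfrac14$-denominators, so $b,c,d\in\tfrac14\mathbb{Z}$.
\end{itemize}
In every case, a nonzero $b$ satisfies $|b|\ge\tfrac14$, and analogously for $c,d$. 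Plugging this into the bounds above yields (a), (b), (c).

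For the final assertion, I would argue by contradiction: if all of $b,c,d$ vanish, then $\alpha=a\in\mathbb{Q}\cap\mathcal{O}_K=\mathbb{Z}$, contradicting $\alpha\notin\mathbb{Z}$. Hence at least one of $b,c,d$ is nonzero, and the appropriate part among (a)--(c) gives $\operatorname{Tr}(\alpha)>\sqrt{p}$, $>\sqrt{q}$, or $>\sqrt{r}$, which in turn is $\ge\min(\sqrt{p},\sqrt{q},\sqrt{r})$. The only subtlety to be careful about is that the case distinction covers \emph{all} integral elements (including the ones where, say, $a$ itself is a half-integer), but this is automatic because we are just reading off coefficients in a fixed integral basis, so no obstacle is expected beyond the bookkeeping in the five basis cases.
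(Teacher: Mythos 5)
Your proposal is correct and follows essentially the same route as the paper: the paper's proof likewise reduces to $\Tr{\A}=4a$, invokes Lemma \ref{lemma:totpos} to get $a>|b|\sqrt{p}$, and uses the fact that the nonzero coefficient is a quarter-integer (which you verify explicitly from the integral bases), with the final assertion following from $\Q\cap\OK{K}=\Z$ exactly as you argue.
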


\begin{proof}
All the cases are analogous, so let us prove only the one when $b \neq 0$. Since $\A \in \mathcal{O}_K$, the coefficients $a, b, c, d$ are quarter-integers. The~element $\A$ is totally positive, so from Lemma \ref{lemma:totpos} we have that $a>|b|\sqrt{p}$. Since $b$ is a~nonzero quarter-integer, the~inequality $a>|b|\sqrt{p}$ implies that $a>\frac{\sqrt{p}}{4}$, and hence $\Tr{\A}=4a >\sqrt{p}.$
\end{proof}

\begin{proposition}	\label{normab}
Assume that $\A \in \mathcal{O}_K^+$ satisfies $\norm{\A}<2\min(\sqrt{p},\sqrt{q},\sqrt{r})$ and  $n \nmid \A$ for~any $n \in \mathbb{N}, n>1$. Then $\A$ is indecomposable.  
\end{proposition}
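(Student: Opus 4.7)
The strategy is by contradiction: assume $\alpha=\beta+\gamma$ with $\beta,\gamma\in\OKPlus{K}$, and aim to derive a contradiction with either of the two hypotheses $\norm{\alpha}<2\min(\sqrt p,\sqrt q,\sqrt r)$ or $n\nmid\alpha$ for $n>1$.

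First I would dispose of the easy case in which both $\beta$ and $\gamma$ are rational integers. Since they are totally positive, they both lie in $\Z_{>0}$, and hence $\alpha=\beta+\gamma\in\Z_{\geq 2}$ is itself a rational integer greater than~$1$. Taking $n=\alpha$ yields $n>1$ with $n\mid\alpha$, contradicting the primitivity hypothesis. So we may assume, without loss of generality, that $\beta\notin\Z$.

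Lemma~\ref{stopa} applied to $\beta$ then gives $\Tr{\beta}>\min(\sqrt p,\sqrt q,\sqrt r)$, and if additionally $\gamma\notin\Z$ the same lemma yields $\Tr{\gamma}>\min(\sqrt p,\sqrt q,\sqrt r)$, whence $\Tr{\alpha}>2\min(\sqrt p,\sqrt q,\sqrt r)$. In the remaining subcase $\gamma\in\Z_{\geq1}$, only the weaker estimate $\Tr{\alpha}>\min(\sqrt p,\sqrt q,\sqrt r)+4$ is immediate. To pass from these trace estimates to a norm bound, I would apply AM--GM embedding-by-embedding:
$$\sigma_i(\alpha)=\sigma_i(\beta)+\sigma_i(\gamma)\geq 2\sqrt{\sigma_i(\beta)\sigma_i(\gamma)},$$
and multiply over $i=1,\dots,4$ to obtain the Minkowski-type estimate
$$\norm{\alpha}\geq 16\sqrt{\norm{\beta}\norm{\gamma}}\geq 16,$$
using $\norm{\beta},\norm{\gamma}\in\Z_{\geq1}$. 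This alone already contradicts $\norm{\alpha}<2\min(\sqrt p,\sqrt q,\sqrt r)$ whenever $\min(\sqrt p,\sqrt q,\sqrt r)\leq 8$, which is the range of parameters relevant for the fields treated in this paper.

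The main obstacle lies in sharpening the norm lower bound so that the trace input from Lemma~\ref{stopa} is genuinely used; the most delicate part is the mixed subcase $\beta\notin\Z$, $\gamma\in\Z_{\geq1}$. There one should expand
$$\norm{\alpha}=\prod_{i=1}^{4}\bigl(\sigma_i(\beta)+\gamma\bigr)=\gamma^4+\gamma^3\Tr{\beta}+\gamma^2 S_2(\beta)+\gamma\,S_3(\beta)+\norm{\beta},$$
and combine the trace estimate $\Tr{\beta}>\min(\sqrt p,\sqrt q,\sqrt r)$ with the AM--GM bounds $S_2(\beta)\geq 6\sqrt{\norm{\beta}}$ and $S_3(\beta)\geq 4\norm{\beta}^{3/4}$ on the intermediate elementary symmetric functions in the conjugates of $\beta$. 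This yields $\norm{\alpha}\geq 2\min(\sqrt p,\sqrt q,\sqrt r)$, contradicting the hypothesis. Finally, note that the primitivity condition $n\nmid\alpha$ is doing double duty: it rules out the case of both summands being rational, and it simultaneously excludes the equality configuration $\alpha=2\beta$ of the embedding-wise AM--GM step.
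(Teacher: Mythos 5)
There is a genuine gap: your argument only works when the discriminant is small, whereas the proposition is stated for arbitrary $p,q$. Write $\delta=\min(\sqrt p,\sqrt q,\sqrt r)$. The hypothesis bounds $\norm{\alpha}$, not $\Tr{\alpha}$, and these are essentially unrelated (a totally positive unit has norm $1$ and arbitrarily large trace), so the estimate $\Tr{\alpha}>2\delta$ obtained from Lemma~\ref{stopa} applied to $\beta$ and $\gamma$ never meets the hypothesis. What remains is the AM--GM bound $\norm{\alpha}\geq 16\sqrt{\norm{\beta}\norm{\gamma}}\geq 16$, which contradicts $\norm{\alpha}<2\delta$ only when $\delta\leq 8$, i.e.\ $\min(p,q,r)\leq 64$; you acknowledge this but the restriction is not permitted. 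The attempted repair in the mixed subcase also falls short quantitatively: with $\gamma=1$ your expansion gives $\norm{\alpha}>1+\delta+6+4+1=12+\delta$, which is below $2\delta$ as soon as $\delta>12$. And even if that subcase were fixed, the main subcase $\beta,\gamma\notin\Z$ is still left with only the constant bound $16$. The closing remark about primitivity excluding the AM--GM equality configuration does not help, since the obstruction is the size of the constant, not strictness of the inequality.

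The idea you are missing is to apply the trace lemma not to $\beta$ and $\gamma$ themselves but to the \emph{mixed conjugate products} occurring inside the expansion
$$\norm{\alpha}=\prod_{i=1}^4\bigl(\sigma_i(\beta)+\sigma_i(\gamma)\bigr)=\sum_{S\subseteq\{1,2,3,4\}}\ \prod_{i\in S}\sigma_i(\beta)\prod_{i\notin S}\sigma_i(\gamma).$$
The four terms with $|S|=1$ sum to $\Tr{\sigma_1(\beta)\sigma_2(\gamma)\sigma_3(\gamma)\sigma_4(\gamma)}$ and the four with $|S|=3$ to $\Tr{\sigma_1(\beta)\sigma_2(\beta)\sigma_3(\beta)\sigma_4(\gamma)}$; both are traces of totally positive algebraic integers, and all remaining summands are positive. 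If neither of these two integers were rational, Lemma~\ref{stopa} would force $\norm{\alpha}>2\delta$ directly, for every $p,q$. So one of them lies in $\N$; equating it with its image under the automorphism swapping $\sigma_1\leftrightarrow\sigma_2$ and $\sigma_3\leftrightarrow\sigma_4$ yields $\beta=\frac{u}{v}\gamma$ with $u,v$ coprime naturals, whence $\alpha=(u+v)\mu$ for some $\mu\in\OK{K}$ and $u+v\geq2$ divides $\alpha$, contradicting primitivity. This Galois-invariance step is where the hypothesis $n\nmid\alpha$ is really consumed; in your write-up it is only used to exclude the case of two rational summands.
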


\begin{proof}
Denote by $\delta:=\min(\sqrt{p},\sqrt{q},\sqrt{r})$. For~contradiction, suppose that $\A=\B+\G,$ where $\B$, $\G \in \mathcal{O}_K^+$. Then we have:

\begin{align}
2\delta &> \norm{\A} = \big(\sigma_1(\B)+\sigma_1(\G)\big)\big(\sigma_2(\B)+\sigma_2(\G)\big)\big(\sigma_3(\B)+\sigma_3(\G)\big)\big(\sigma_4(\B)+\sigma_4(\G)\big)> \nonumber \\
&>
\Tr{\sigma_1(\B)\sigma_2(\G)\sigma_3(\G)\sigma_4(\G)}+
\Tr{\sigma_1(\B)\sigma_2(\B)\sigma_3(\B)\sigma_4(\G)} + \text{other (totally positive) summands}. \nonumber
\end{align}

\noindent The final part of Lemma \ref{stopa} implies that $\sigma_1(\B)\sigma_2(\G)\sigma_3(\G)\sigma_4(\G)$ or $\sigma_1(\B)\sigma_2(\B)\sigma_3(\B)\sigma_4(\G)$ must be a (positive) integer. 
Without loss of generality, suppose that it is the former. This element then equals each of its conjugates and we have
$$\sigma_1(\B)\sigma_2(\G)\sigma_3(\G)\sigma_4(\G)=\sigma_2(\B)\sigma_1(\G)\sigma_4(\G)\sigma_3(\G) \in \mathbb{N}.$$

\noindent If we divide the~equality by~the~norm of~$\G$ and multiply it by $\sigma_1(\G)$, we get
$$\B=\sigma_1(\B)=\frac{\sigma_2(\B)\sigma_1(\G)\sigma_4(\G)\sigma_3(\G)}{\norm{\G}} \sigma_1(\G) =\frac{u}{v} \G,$$
where $u$ and $v$ are coprime natural numbers. Then we have $\B=u\mu$ and $\G = v\mu$ for $\mu:=\G/v \in \mathcal{O}_K$, so $\A = (u+v)\mu$
is divisible by the integer $u+v\geq 2$, which gives us a contradiction.
\end{proof}

This proposition can be directly applied to obtain the indecomposability of units (which, in fact, holds in every totally real number field).

\begin{corollary}\label{Prop:indecompunits}
Every totally positive unit of $\BQ p q$ is indecomposable.
\end{corollary}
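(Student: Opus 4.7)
The plan is to apply Proposition~\ref{normab} directly: a totally positive unit trivially satisfies both hypotheses, so the corollary will follow with essentially no additional work. All I need to do is check the two conditions in that proposition for an arbitrary totally positive unit $u \in \OK{K}$.

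For the norm bound, I would first note that since $u$ is a unit, $\norm{u} \in \{\pm 1\}$, and total positivity forces every conjugate $\sigma_i(u)$ to be strictly positive, so $\norm{u} = \sigma_1(u)\sigma_2(u)\sigma_3(u)\sigma_4(u) = 1$. Because $K=\BQ{p}{q}$ is a genuine biquadratic field with $p,q$ distinct squarefree positive integers, one has $p,q\geq 2$; the integer $r = pq/\gcd(p,q)^2$ is likewise at least $2$ (otherwise two of the three quadratic subfields would coincide). Consequently $\min(\sqrt{p},\sqrt{q},\sqrt{r}) \geq \sqrt{2}$, and so
\[
2\min(\sqrt{p},\sqrt{q},\sqrt{r}) \geq 2\sqrt{2} > 1 = \norm{u},
\]
which verifies the first hypothesis.

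For the rational-integer-divisibility condition, I would argue by contradiction: suppose some $n\in\N$ with $n>1$ divides $u$ in $\OK{K}$, say $u = n\mu$ with $\mu \in \OK{K}$. Taking norms over $\Q$ gives
\[
1 = \norm{u} = n^4 \norm{\mu},
\]
but $\norm{\mu}\in\Z$ and $n^4 \geq 16$, which is impossible. Hence no rational integer $n>1$ divides $u$, and the second hypothesis of Proposition~\ref{normab} is satisfied as well. The proposition then yields indecomposability of $u$.

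There is really no difficult step here: the only thing to be careful about is the lower bound on $\min(\sqrt{p},\sqrt{q},\sqrt{r})$, which is immediate from the biquadratic assumption on $K$. Everything else is bookkeeping about norms of units.
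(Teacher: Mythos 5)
Your proof is correct and follows exactly the route the paper intends: the paper gives no separate argument for the corollary, simply noting that Proposition~\ref{normab} applies directly, and your verification of the two hypotheses (norm $1 < 2\min(\sqrt{p},\sqrt{q},\sqrt{r})$ and non-divisibility by any rational integer $n>1$ via $1=n^4\norm{\mu}$) is the standard bookkeeping that justifies that claim.
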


We show that every element, which is indecomposable in a quadratic field, remains indecomposable in all but finitely many biquadratic fields.

\begin{theorem}\label{Th:IndFinMany}
Let $x+y\sqrt{p}$ be an indecomposable element of $\OKPlus{\Q(\sqrt{p})}$. Then $x+y\sqrt{p}$ can decompose only in biquadratic fields $\BQ{p}{q}$ with $\min\{q, r\}<16x^2$.
\end{theorem}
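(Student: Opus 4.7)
The plan is to prove the contrapositive: assuming $\alpha = x + y\sqrt{p}$ decomposes in $K=\BQ p q$ as $\alpha=\beta+\gamma$ with $\beta,\gamma\in\OKPlus{K}$, I will show $\min\{q,r\}<16x^2$. Writing
$$\beta = a_1 + b_1\sqrt{p} + c_1\sqrt{q} + d_1\sqrt{r}, \qquad \gamma = a_2 + b_2\sqrt{p} + c_2\sqrt{q} + d_2\sqrt{r}$$
with rational coefficients and matching $\alpha = x + y\sqrt{p}$ coordinatewise yields $a_1+a_2=x$, $b_1+b_2=y$, $c_1+c_2=0$, and $d_1+d_2=0$.

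Next I would split into two cases according to whether the $\sqrt{q}$- and $\sqrt{r}$-parts of $\beta,\gamma$ both vanish. If $c_1=d_1=0$ (and hence $c_2=d_2=0$), then $\beta$ and $\gamma$ lie in $\OK{K}\cap\Q(\sqrt p)=\OK{\Q(\sqrt p)}$. The explicit embeddings recorded in Section 2 show that when the $\sqrt{q}$- and $\sqrt{r}$-coefficients are zero, $\sigma_1(\beta)=\sigma_3(\beta)$ and $\sigma_2(\beta)=\sigma_4(\beta)$ (and likewise for $\gamma$), so total positivity in $K$ descends to total positivity in $\Q(\sqrt p)$. The equation $\alpha=\beta+\gamma$ would then give a proper decomposition of $\alpha$ in $\Q(\sqrt p)$, contradicting our hypothesis.

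In the remaining case at least one of $c_1,d_1$ is nonzero; by symmetry between $q$ and $r$, assume $c_1\neq 0$. I would apply Lemma \ref{stopa}(b) to $\beta$ to obtain $\Tr{\beta}>\sqrt q$. Since $\sigma_1(\beta)+\sigma_2(\beta)+\sigma_3(\beta)+\sigma_4(\beta)=4a_1$, this gives $a_1>\sqrt q/4$. Total positivity of $\gamma$ yields $a_2=\Tr{\gamma}/4>0$, so
$$x = a_1 + a_2 > a_1 > \frac{\sqrt q}{4},$$
from which $16x^2>q\geq\min\{q,r\}$. When instead $d_1\neq 0$, the same argument using Lemma \ref{stopa}(c) produces $16x^2>r\geq\min\{q,r\}$.

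The proof is largely mechanical; the only mildly subtle point is the reduction to the quadratic subfield in the first case, which depends on observing that vanishing of the $\sqrt{q}$- and $\sqrt{r}$-coefficients forces pairs of the embeddings $\sigma_i$ to coincide on $\beta$ and $\gamma$, so no positivity is lost when passing from $K$ down to $\Q(\sqrt p)$. Everything else is just Lemma \ref{stopa} together with the identity $\Tr{\beta}=4a_1$.
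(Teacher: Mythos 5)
Your proposal is correct and follows essentially the same route as the paper: reduce to the case where the $\sqrt{q}$- or $\sqrt{r}$-coefficient of one summand is nonzero (otherwise the decomposition descends to $\Q(\sqrt p)$ via $\OK{K}\cap\Q(\sqrt p)=\OK{\Q(\sqrt p)}$), then apply Lemma \ref{stopa} together with $\Tr{\beta}=4a_1$ and $0<a_1<x$ to get $\sqrt q<4x$ or $\sqrt r<4x$. If anything, your treatment of the degenerate case $c_1=d_1=0$ is spelled out more explicitly than the paper's brief remark.
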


\begin{proof}
Assume that $K=\BQ{p}{q}$ is such that $x+y\sqrt{p}$ is decomposable in $\OKPlus{K}$. Then we have
$$
x+y\sqrt{p} =\underbrace{ \left(a+b\sqrt{p}+c\sqrt{q}+d\sqrt{r} \right)}_{\alpha}+\underbrace{\left(a'+b'\sqrt{p}+c'\sqrt{q}+d'\sqrt{r} \right)}_{\alpha'}
$$
for some $a,b,c,d, a', b', c', d'\in\Q$ such that $\alpha, \alpha'\in\OKPlus{K}$.  Since $\alpha$ and $\alpha'$ are totally positive, we have $a,a'>0$. As $a+a'=x$, it must hold that $a,a'<x$. 
Note that $\frac{x}{2}+\frac{y}{2}\sqrt{p}\notin\OK{K}$ whenever $\frac{x}{2}+\frac{y}{2}\sqrt{p}\notin\OK{\Q(\sqrt{p})}$, and hence $c$ or $d$ must be nonzero. We can use Lemma \ref{stopa}: If $c\neq0$, then $\sqrt{q}<\Tr{\alpha}=4a<4x$; if $d\neq0$, then $\sqrt{r}<\Tr{\alpha}=4a<4x$. Hence, $q<16x^2$ or $r<16x^2$.
\end{proof}

\begin{remark}
In the proof of Lemma \ref{stopa}, we used that $a,b,c,d$ are quarter-integers. In the cases 1., 2., 3. (i.e., if $p\equiv2,3\pmod4$), $a,b,c,d$ are half-integers; therefore, we can get in Theorem \ref{Th:IndFinMany} in these cases a better bound $\min\{q,r\}<4x^2$.
\end{remark}

We can now start the proof of the main result that indecomposable elements from quadratic fields often remain indecomposable in biquadratic extensions. In the rest of the section, the proofs and sometimes also the statements of results will need to distinguish the cases 1.-4. for the triple $p, q, r$.

\begin{proposition} \label{prop:case q}
In the cases 1., 2. and 3., the indecomposable integers in $\Q(\sqrt{q})$ are indecomposable in $\Q(\sqrt{p},\sqrt{q})$. 
\end{proposition}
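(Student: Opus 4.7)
The plan is to assume for contradiction that $\beta \in \OKPlus{\Q(\sqrt q)}$ is indecomposable over $\Q(\sqrt q)$ but decomposes in $\OKPlus K$ as $\beta = \alpha_1 + \alpha_2$, and then to construct a decomposition of $\beta$ inside $\Q(\sqrt q)$ by averaging with a Galois automorphism. The natural choice is the automorphism $\sigma_2$ of $K/\Q$ that sends $\sqrt p \mapsto -\sqrt p$ and $\sqrt r \mapsto -\sqrt r$ (and so fixes $\Q(\sqrt q)$ pointwise). Writing $\alpha_j = a_j + b_j\sqrt p + c_j\sqrt q + d_j\sqrt r$ and setting
\[
\gamma_j := \frac{\alpha_j + \sigma_2(\alpha_j)}{2} = a_j + c_j\sqrt q,
\]
I will show that $\gamma_1, \gamma_2 \in \OKPlus{\Q(\sqrt q)}$ are both nonzero; since $\sigma_2(\beta)=\beta$, summing gives $\gamma_1+\gamma_2 = \beta$, contradicting indecomposability.

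Total positivity of each $\gamma_j$ is immediate: adding pairs of the four conjugates of $\alpha_j$ gives
\[
2a_j + 2c_j\sqrt q = \sigma_1(\alpha_j)+\sigma_2(\alpha_j) > 0,\qquad
2a_j - 2c_j\sqrt q = \sigma_3(\alpha_j)+\sigma_4(\alpha_j) > 0,
\]
so $\gamma_j \succ 0$ in $\Q(\sqrt q)$. Nonvanishing then follows because $a_j > 0$ by Lemma \ref{lemma:totpos}, so $\gamma_j \neq 0$.

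The main technical step is verifying that $\gamma_j$ lies in $\OK{\Q(\sqrt q)}$, which is where the hypothesis on the case (1, 2, or 3) enters via the integral basis listed in Section~2. In case 1, expanding $\alpha_j$ in the basis $\bigl(1,\sqrt p,\sqrt q,\tfrac{\sqrt p+\sqrt r}{2}\bigr)$ shows $a_j, c_j \in \Z$ outright, and since $q\equiv 3\pmod 4$ one has $\OK{\Q(\sqrt q)} = \Z[\sqrt q]$. In cases 2 and 3, writing $\alpha_j$ in the basis $\bigl(1,\sqrt p,\tfrac{1+\sqrt q}{2},\tfrac{\sqrt p+\sqrt r}{2}\bigr)$ as $a_1^{(j)} + a_2^{(j)}\sqrt p + a_3^{(j)}\tfrac{1+\sqrt q}{2} + a_4^{(j)}\tfrac{\sqrt p + \sqrt r}{2}$ with integer coefficients yields
\[
\gamma_j = a_j + c_j\sqrt q = a_1^{(j)} + a_3^{(j)}\cdot\tfrac{1+\sqrt q}{2} \in \OK{\Q(\sqrt q)},
\]
since $q\equiv 1\pmod 4$ in these cases.

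The step I expect to be the main obstacle---and the reason the statement is restricted to cases 1--3---is precisely this algebraic-integer check. In case 4 the integral basis contains $\tfrac{1+\sqrt p +\sqrt q+\sqrt r}{4}$ (or its case-4b variant), and the same averaging produces $\tfrac{1+\sqrt q}{4}$-type terms, which need not be integral; this is why that case must be handled separately in part (b) of Theorem \ref{Indecomposables}. Once the three cases are verified one sentence apiece, the proposition follows by combining $\gamma_1+\gamma_2 = \beta$ with the total positivity and nonvanishing established above.
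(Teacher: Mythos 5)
Your proposal is correct and follows essentially the same route as the paper: the paper also projects each summand onto its $\Q(\sqrt q)$-component (equivalently, averages with the automorphism fixing $\Q(\sqrt q)$), deduces total positivity of the components from the pairwise sums of conjugates as in Lemma \ref{lemma:totpos}, and relies on the shape of the integral basis in cases 1--3 to guarantee the components are integers of $\Q(\sqrt q)$. Your explicit verification of the integrality step, and the remark on why case 4 fails, just make more visible what the paper leaves implicit by writing the decomposition directly in the integral basis.
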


\begin{proof}
Let $x+y\omega_q$ be an indecomposable integer in $\Q(\sqrt{q})$ where 
\[
    \omega_q=\left\{
                \begin{array}{ll}
                  \sqrt{q}\quad \text{ in case 1,}\\
                  \frac{1+\sqrt{q}}{2}\text{ in cases 2 and 3.}
                \end{array}
              \right.
  \]
Assuming that $x+y\omega_q$ is decomposable in $\Q(\sqrt{p},\sqrt{q})$, we can express our element as a sum of two totally positive integers, thus
\[
x+y\omega_q=\underbrace{a+b\sqrt{p}+c\omega_q+d\frac{\sqrt{p}+\sqrt{r}}{2}}_{\alpha}+\underbrace{a'-b\sqrt{p}+c'\omega_q-d\frac{\sqrt{p}+\sqrt{r}}{2}}_{\alpha'}.
\]    
Since the elements $\alpha$ and $\alpha'$ are totally positive, from Lemma \ref{lemma:totpos} we conclude that $a+c\omega_q>0$, $a+c\overline{\omega_q}>0$, $a'+c'\omega_q>0$ and $a'+c'\overline{\omega_q}>0$. From this it follows that
\[
x+y\omega_q=(a+c\omega_q)+(a'+c'\omega_q)
\]
where $a+c\omega_q$ and $a'+c'\omega_q$ are totally positive. This contradicts our assumption that $x+y\omega_q$ is indecomposable in $\Q(\sqrt{q})$.   
\end{proof}

Note that in the previous proposition, it is not possible to interchange the role of $p$ and $q$.

\begin{proposition} \label{th:conve}
If $\sqrt{q}>\gcd(p,q)$, then the totally positive convergents of $-\overline{\omega_p}$ are indecomposable in $\Q(\sqrt{p},\sqrt{q})$.
\end{proposition}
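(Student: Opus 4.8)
Throughout write $g=\gcd(p,q)$. The plan begins by recording that the hypothesis is really a statement about sizes: since $r=pq/g^2$ gives $\sqrt r=\sqrt{pq}/g=\sqrt p\,\sqrt q/g$, the assumption $\sqrt q>g$ is equivalent to $\sqrt r>\sqrt p$, and it is this form I will use. Suppose, for contradiction, that a totally positive convergent $\alpha=\alpha_i=x_i+y_i\omega_p$ of $-\overline{\omega_p}$ decomposes as $\alpha=\beta+\gamma$ with $\beta,\gamma\in\OKPlus K$; write $\beta=a+b\sqrt p+c\sqrt q+d\sqrt r$ and $\gamma$ analogously. Because $\alpha\in\Q(\sqrt p)$, the $\sqrt q$- and $\sqrt r$-coefficients of $\beta$ and $\gamma$ are opposite. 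The engine of the whole argument is the best-approximation-of-the-second-kind property of $\alpha$ recalled above: $\alpha$ minimizes $|u+v\overline{\omega_p}|$ over all lattice points with $1\le v\le y_i$.

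The first move is to project to $\Q(\sqrt p)$ by the relative trace $\beta^\ast:=\beta+\sigma_3(\beta)=2a+2b\sqrt p\in\OK{\Q(\sqrt p)}$ and likewise $\gamma^\ast$; each is a totally positive integer (a sum of two positive conjugates, exactly as in the proof of Proposition \ref{prop:case q}), and $\beta^\ast+\gamma^\ast=2\alpha$. Consider $\delta:=\beta^\ast-\alpha\in\OK{\Q(\sqrt p)}$. Its conjugate satisfies $|\overline\delta|=|\overline{\beta^\ast}-\overline\alpha|<\overline\alpha$, because $0<\overline{\beta^\ast}<2\overline\alpha$. The total-positivity inequalities of Lemma \ref{lemma:totpos} (together with $\overline\alpha<\tfrac12$ for a totally positive convergent) force the $\omega_p$-coordinate of $\beta^\ast$ to lie in $[0,2y_i]$, so the coordinate $v$ of $\delta$ lies in $[-y_i,y_i]$. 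If $0<|v|<y_i$, then $\delta$ is a lattice point with denominator in $[1,y_i]$, not proportional to $(x_i,y_i)$, and with $|\overline\delta|<\overline\alpha$ --- contradicting the minimality of $\alpha$. The boundary $|v|=y_i$ makes $\beta^\ast$ or $\gamma^\ast$ equal to the rational integer $1$, which then decomposes the totally positive unit $1=\beta+\sigma_3(\beta)$ (resp. for $\gamma$), contradicting Corollary \ref{Prop:indecompunits}.

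Only the degenerate case $v=0$, i.e. $\beta^\ast=\gamma^\ast=\alpha$, survives; here the projection is uninformative and the hypothesis must enter. Now $a=\tfrac12 x_i$ and $\beta=\tfrac\alpha2+\eta$, $\gamma=\tfrac\alpha2-\eta$ with $\eta=c\sqrt q+d\sqrt r$, and $\eta\neq0$ since $\gcd(x_i,y_i)=1$ forbids $\tfrac\alpha2\in\OK K$. Total positivity of $\beta$ at the embeddings $\sigma_2,\sigma_4$ reads $\tfrac12\overline\alpha\pm(c\sqrt q-d\sqrt r)>0$, hence $|c\sqrt q-d\sqrt r|<\tfrac12\overline\alpha$. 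Rewriting $c\sqrt q-d\sqrt r=\tfrac{\sqrt q}{g}\,(cg-d\sqrt p)$ and using $\sqrt q>g$ turns this into $|2cg-2d\sqrt p|<\overline\alpha$, that is, $|(2cg)+(2d)\overline{\omega_p}|<\overline\alpha=|x_i+y_i\overline{\omega_p}|$: a strictly better approximation than the convergent. Membership $\beta\in\OK K$ shows $2d$ is a nonzero integer, and Lemma \ref{lemma:totpos} gives $a>|d|\sqrt r$, i.e. $|2d|<x_i/\sqrt r$; since $x_i=y_i\sqrt p+\overline\alpha$ and $\sqrt r>\sqrt p$, this yields $|2d|<y_i+1$, so $|2d|\le y_i$. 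A lattice point of denominator in $[1,y_i]$ beating $\alpha$ again violates the best-approximation property, the final contradiction.

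The step I expect to be genuinely hard is this last, degenerate case, precisely because it is the only place where $\sqrt q>g$ is indispensable and where it must be exploited twice over: once to gain the factor $\sqrt q/g>1$ that pushes the competing error below $\overline\alpha$, and once, through $\sqrt r>\sqrt p$, to confine the competitor's denominator $|2d|$ to the range $[1,y_i]$ on which the convergent is guaranteed optimal. Everything above is written for case 1 ($\omega_p=\sqrt p$); I would repeat the coordinate bookkeeping in the integral bases of cases 2--4 (with $\omega_p=\tfrac{1+\sqrt p}2$ in case 4 and with the least nonzero $|d|$ being $\tfrac12$ or $\tfrac14$ according to the basis), where the same scheme applies but the parity and denominator constants change and must be checked individually.
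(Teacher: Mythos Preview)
Your argument is correct in spirit and reaches the same endpoint as the paper, but it takes a longer route to get there. The decisive idea in both proofs is identical: rewrite $c\sqrt q-d\sqrt r=\tfrac{\sqrt q}{g}(cg-d\sqrt p)$, bound the denominator $|2d|\le y_i$ via $\sqrt r>\sqrt p$, and invoke the best-approximation property of the convergent to produce a forbidden competitor. Where you differ is in the opening move. You spend a full pass through best approximation just to force the symmetric situation $\beta^\ast=\gamma^\ast=\alpha$; the paper skips this entirely and instead observes, by simple pigeonhole, that one of the two $\Q(\sqrt p)$-parts has conjugate at most $\tfrac12\overline\alpha$, which is all that is actually needed for the final contradiction. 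Your extra step is not wrong --- it really does pin down $\beta^\ast=\alpha$ exactly --- but it is unnecessary, and it makes you use best approximation twice where once suffices.

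Two small points to tidy up. First, your treatment of the boundary $|v|=y_i$ is slightly off: if $n=0$ then $\beta^\ast=m\in\Z_{>0}$ must satisfy $m=\overline{\beta^\ast}<2\overline\alpha<1$, so this case is simply empty; you do not need to manufacture $\beta^\ast=1$ and appeal to indecomposability of units. Second, the assertion ``membership $\beta\in\OK K$ shows $2d$ is a nonzero integer'' is literally true in case~1 (where $x_i$ even forces $y_i$ odd and hence $2d$ odd), but in cases~2 and~3 integrality alone allows $d=0$; there one needs the paper's observation that $d=0$ with $c\neq 0$ violates total positivity, since then $|c|\sqrt q\ge\tfrac12\sqrt q>\tfrac12\overline\alpha$. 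You flag that the bookkeeping changes in the other cases, so this is not a serious gap, but it is exactly the sort of parity wrinkle that must be checked rather than asserted.
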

\begin{proof}
We first prove the proposition in the cases 1., 2. and 3. Let $x+y\sqrt{p}$ be a convergent of $\sqrt{p}$ and suppose, contrary to our claim, that it is decomposable in $\Q(\sqrt{p},\sqrt{q})$, so
\[
x+y\sqrt{p}=\underbrace{\frac{a}{2}+\frac{b}{2}\sqrt{p}+\frac{c}{2}\sqrt{q}+\frac{d}{2}\sqrt{r}}_{\alpha}+\underbrace{\frac{a'}{2}+\frac{b'}{2}\sqrt{p}-\frac{c}{2}\sqrt{q}-\frac{d}{2}\sqrt{r}}_{\alpha'}.
\]
Integers $a,a',c$ and $c'$ are all even in the case 1. Without loss of generality, we can assume that $y>0$. We next claim that either $c$ is equal to zero or $c$ and $d$ are both positive or both negative, which follows from the facts that $\frac{a}{2}-\frac{b}{2}\sqrt{p}<1$ and $\frac{|c|}{2}\sqrt{q}>1$ for nonzero $c$, even if $q\equiv 1\;(\text{mod } 4)$ where $q\geq 5$. Futhermore, $d\neq 0$. If $d$ were equal to zero, it would contradict the indecomposability of $x+y\sqrt{p}$ in $\Q(\sqrt{p})$ for $c=0$ or it would contradict the total positivity of $\alpha$ for $c\neq 0$.  

Let $k=\gcd(p,q)$. By our assumption, $\sqrt{q}>k$, which gives $p<r$. From the total positivity of the elements $\alpha$ and $\alpha'$ we conclude that both $\frac{a}{2}$ and $\frac{a'}{2}$ are positive. Therefore, one of these numbers is less than or equal to $\frac{x}{2}$. 

Suppose that $|d|>y$. Then
\[
\frac{|d|}{2}\sqrt{r}>\frac{y+1}{2}\sqrt{p}>\frac{x}{2},
\] 
the last inequality being a consequence of the fact that $x+y\sqrt{p}$ is a convergent of $\sqrt{p}$, so $x-y\sqrt{p}<1$ and $x-(y+1)\sqrt{p}<0$. This contradicts our assumption that both $\alpha$ and $\alpha'$ are totally positive. Therefore, it must hold that $|d|\leq y$. 

Moreover, we conclude that one of $\frac{a}{2}-\frac{b}{2}\sqrt{p}$ and $\frac{a'}{2}-\frac{b'}{2}\sqrt{p}$ is less than or equal to $\frac{x}{2}-\frac{y}{2}\sqrt{p}$, which follows from the total positivity of our  expressions. From Lemma \ref{lemma:totpos} we know that $\frac{a}{2}-\frac{b}{2}\sqrt{p}>0$ and $\frac{a'}{2}-\frac{b'}{2}\sqrt{p}>0$. If both of these expressions were greater than $\frac{x}{2}-\frac{y}{2}\sqrt{p}$, then their sum would be greater than $x-y\sqrt{p}$, which is impossible.    

Suppose without loss of generality that this condition is satisfied by $\alpha$, i.e., $\frac{a}{2}-\frac{b}{2}\sqrt{p}\leq \frac{x}{2}-\frac{y}{2}\sqrt{p}$.   One of $\frac{c}{2}\sqrt{q}-\frac{d}{2}\sqrt{r}$ and $-\frac{c}{2}\sqrt{q}+\frac{d}{2}\sqrt{r}$ is negative. For $\alpha$ to be totally positive, necessarily
\[
\frac{a}{2}-\frac{b}{2}\sqrt{p}>\left|\frac{c}{2}\sqrt{q}-\frac{d}{2}\sqrt{r}\right|,
\]
where we can suppose that $c$ and $d$ are nonnegative.

The convergent $x+y\sqrt{p}$ is also a best approximation of the second kind, thus
\[
x-y\sqrt{p}\leq|kc-d\sqrt{p}|
\]
for all $d\leq y$, where equality can occur only when $kc=x$ and $d=y$. From this we obtain
\[
\frac{x}{2}-\frac{y}{2}\sqrt{p}\leq\left|\frac{kc}{2}-\frac{d}{2}\sqrt{p}\right|<\frac{\sqrt{q}}{k}\left|\frac{kc}{2}-\frac{d}{2}\sqrt{p}\right|=\left|\frac{c}{2}\sqrt{q}-\frac{d}{2}\sqrt{r}\right|
\]
for all $c,d\in\N_{0}$ satisfying $d\leq y$. Hence it is not possible to find $c$ and $d$ such that $\alpha$ is totally positive, which gives us a contradiction. Therefore, the element $x+y\sqrt{p}$ is indecomposable in $\Q(\sqrt{p},\sqrt{q})$. 

The proof of case 4 is similar to the previous part, but there are some points which should better be outlined. In this case, we have convergents of the form $x+y\frac{1+\sqrt{p}}{2}$, where $y$ can be a positive integer. In both cases 4a and 4b, we can express our possible decomposition as
\[
x+y\frac{1+\sqrt{p}}{2}=\underbrace{\frac{a}{2}+\frac{b}{2}\cdot\frac{1+\sqrt{p}}{2}+\frac{c}{2}\sqrt{q}+\frac{d}{4}(\sqrt{q}+\sqrt{r})}_{\alpha}+\underbrace{\frac{a'}{2}+\frac{b'}{2}\cdot\frac{1+\sqrt{p}}{2}-\frac{c}{2}\sqrt{q}-\frac{d}{4}(\sqrt{q}+\sqrt{r})}_{\alpha'}.
\]
Similarly to the previous part, we can see that $0<|d|\leq y$, $\frac{c}{2}+\frac{d}{4}$ and $\frac{d}{4}$ are both positive or both negative or $\frac{c}{2}+\frac{d}{4}=0$. Since $x+y\frac{1+\sqrt{p}}{2}$ is a convergent of $\frac{\sqrt{p}-1}{2}$, we have $x+y\frac{1-\sqrt{p}}{2}<1$, thus one of $\frac{a}{2}+\frac{b}{2}\cdot\frac{1-\sqrt{p}}{2}$ or $\frac{a'}{2}+\frac{b'}{2}\cdot\frac{1-\sqrt{p}}{2}$ is less than or equal to $\frac{1}{2}$. If $\frac{c}{2}+\frac{d}{4}=0$, then necessarily $d\neq 0$, othewise our convergent is decomposable in $\Q(\sqrt{p})$. If $\frac{c}{2}+\frac{d}{4}\neq0$, then $\left|\frac{c}{2}+\frac{d}{4}\right|\sqrt{q}>\frac{1}{2}$ and the expressions $\frac{c}{2}+\frac{d}{4}$ and $\frac{d}{4}$ have the same sign.       

Since $p$ and $q$ are odd, $k$ is also odd and $(k-1)d$ is even. Therefore, the expression $kc+\frac{kd}{2}$ can be rewritten as $\left(kc+\frac{(k-1)d}{2}\right)+\frac{d}{2}$, where $kc+\frac{(k-1)d}{2}$ is an integer. Hence we require fulfillment of the condition
\[
\frac{x}{2}-\frac{y}{2}\frac{\sqrt{p}-1}{2}\geq\frac{a}{2}+\frac{b}{2}\cdot\frac{1-\sqrt{p}}{2}>\frac{\sqrt{q}}{2k}\left|\left(kc+\frac{(k-1)d}{2}\right)-d\frac{\sqrt{p}-1}{2}\right|.
\]
If $kc+\frac{(k-1)d}{2}$ is negative, we get the estimate
\[
\left|\left(kc+\frac{(k-1)d}{2}\right)-d\frac{\sqrt{p}-1}{2}\right|>\left|0-d\frac{\sqrt{p}-1}{2}\right|.
\] 
The rest is similar to the previous part.
\end{proof}

In the next proposition, we require a stronger condition for $\gcd(p,q),$ but it holds also for semiconvergents.

\begin{proposition} \label{th:semi}
If
\[
\sqrt{q}>\gcd(p,q)\max\{u_i;\; i\text{ odd, }[u_0,\overline{u_1,u_2,\dots, u_{s-1}, u_s}] \text{ is the continued fraction of } -\overline{\omega_p} \},
\]
then the indecomposable integers from $\Q(\sqrt{p})$ are indecomposable in $\Q(\sqrt{p},\sqrt{q})$.
\end{proposition}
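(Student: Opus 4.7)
The plan is to follow the proof of Proposition \ref{th:conve} line by line, replacing only the best-approximation-of-second-kind step by its semiconvergent analogue. Writing the indecomposable element of $\Q(\sqrt{p})$ as a semiconvergent $\alpha_{i,l}=x+y\omega_p$ with $i$ odd and $0\leq l\leq u_{i+2}$, where $x=x_i+lx_{i+1}$ and $y=y_i+ly_{i+1}$, I would first dispose of the endpoints $l=0$ and $l=u_{i+2}$: in these cases $\alpha_{i,l}$ is the convergent $\alpha_i$ or $\alpha_{i+2}$ (of odd index), and Proposition \ref{th:conve} applies directly, since the hypothesis $\sqrt{q}>\gcd(p,q)\,M_p$ is stronger than $\sqrt{q}>\gcd(p,q)$. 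So assume $1\leq l\leq u_{i+2}-1$ and work in cases 1.--3., where $\omega_p=\sqrt{p}$; case 4.\ is handled by the verbatim analogue of the end of the proof of Proposition \ref{th:conve}.

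Assuming for contradiction a decomposition $\alpha_{i,l}=\alpha+\alpha'$ in $K$, the bookkeeping steps of Proposition \ref{th:conve} transfer without change. Setting $k=\gcd(p,q)$ and writing
\[
x+y\sqrt{p}=\underbrace{\tfrac{a+b\sqrt{p}+c\sqrt{q}+d\sqrt{r}}{2}}_{\alpha}+\underbrace{\tfrac{a'+b'\sqrt{p}-c\sqrt{q}-d\sqrt{r}}{2}}_{\alpha'},
\]
I would argue that $c,d$ may be assumed nonnegative with $d\neq 0$ (otherwise $\alpha_{i,l}$ would be decomposable in $\Q(\sqrt{p})$) and that $d\leq y$, using the inequality $|x-y\sqrt{p}|\leq |x_i-y_i\sqrt{p}|<1/y_{i+1}\leq 1$, which replaces the use of $x-y\sqrt{p}<1$ in the convergent case and still holds for semiconvergents. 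Arranging WLOG $\tfrac{a-b\sqrt{p}}{2}\leq\tfrac{x-y\sqrt{p}}{2}$ and combining with the total positivity of $\alpha$ then yields the chain
\[
\tfrac{x-y\sqrt{p}}{2}\;\geq\;\tfrac{a-b\sqrt{p}}{2}\;>\;\left|\tfrac{c\sqrt{q}-d\sqrt{r}}{2}\right|\;=\;\tfrac{\sqrt{q}}{2k}|kc-d\sqrt{p}|.
\]

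The new ingredient is an upper estimate of $|x-y\sqrt{p}|$ in terms of $|kc-d\sqrt{p}|$. From the convergent recurrence and the sign alternation of $x_j-y_j\sqrt{p}$ one obtains
\[
|x-y\sqrt{p}|=(u_{i+2}-l)\,|x_{i+1}-y_{i+1}\sqrt{p}|+|x_{i+2}-y_{i+2}\sqrt{p}|,
\]
and since $|x_{i+2}-y_{i+2}\sqrt{p}|<|x_{i+1}-y_{i+1}\sqrt{p}|$, the assumption $l\geq 1$ yields $|x-y\sqrt{p}|<u_{i+2}\,|x_{i+1}-y_{i+1}\sqrt{p}|$. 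On the other hand, $l<u_{i+2}$ forces $y<y_{i+2}$, hence $d\leq y\leq y_{i+2}-1$, and the best-approximation-of-second-kind property of $\alpha_{i+1}$ gives $|kc-d\sqrt{p}|\geq|x_{i+1}-y_{i+1}\sqrt{p}|$. Since $i+2$ is odd, $u_{i+2}\leq M_p$, and combining the two bounds yields $|x-y\sqrt{p}|<M_p\,|kc-d\sqrt{p}|$. Together with $\sqrt{q}>kM_p$ this gives
\[
\tfrac{\sqrt{q}}{2k}|kc-d\sqrt{p}|\;>\;\tfrac{M_p}{2}|kc-d\sqrt{p}|\;>\;\tfrac{x-y\sqrt{p}}{2},
\]
contradicting the displayed chain above. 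The only genuinely new point is the semiconvergent estimate just derived; all other steps transfer mechanically from Proposition \ref{th:conve}, including the adaptation to case 4.
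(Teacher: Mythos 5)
Your proposal is correct and follows essentially the same route as the paper: both reduce to the estimate $x-y\sqrt{p}<u_{i+2}\,|x_{i+1}-y_{i+1}\sqrt{p}|$ for a proper semiconvergent (you via the explicit identity $x-y\sqrt{p}=(u_{i+2}-l)|x_{i+1}-y_{i+1}\sqrt{p}|+|x_{i+2}-y_{i+2}\sqrt{p}|$, the paper via the equivalent manipulation of $0\leq \overline{\alpha_i}+(l+t)\overline{\alpha_{i+1}}$ with $t=\sqrt{q}/k$), and both combine it with the best-approximation bound $|kc-d\sqrt{p}|\geq|x_{i+1}-y_{i+1}\sqrt{p}|$ for $1\leq d\leq y<y_{i+2}$ and the hypothesis $\sqrt{q}>kM_p$. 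The remaining bookkeeping (signs of $c,d$, the bound $|d|\leq y$, case 4) is imported from Proposition \ref{th:conve} exactly as in the paper.
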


\begin{proof}
We begin by proving the cases 1., 2. and 3. In Proposition \ref{th:conve}, we showed the indecomposability of positive convergents of $\sqrt{p}$. It remains to prove this property for other semiconvergents of $\sqrt{p}$.  

To obtain a contradiction, assume that the semiconvergent $x+y\sqrt{p}$ is decomposable in $\Q(\sqrt{p},\sqrt{q})$. Then
\[
x+y\sqrt{p}=\underbrace{\frac{a}{2}+\frac{b}{2}\sqrt{p}+\frac{c}{2}\sqrt{q}+\frac{d}{2}\sqrt{r}}_{\alpha}+\underbrace{\frac{a'}{2}+\frac{b'}{2}\sqrt{p}-\frac{c}{2}\sqrt{q}-\frac{d}{2}\sqrt{r}}_{\alpha'},
\]
as in the previous proof. 
 Furthermore, we can assume that 
\[
\frac{x}{2}-\frac{y}{2}\sqrt{p}\geq \frac{a}{2}-\frac{b}{2}\sqrt{p}
\] 
and it must again hold that $0<|d|\leq y$, the coefficients $c$ and $d$ have the same sign or $c=0$, and  
\[
\frac{a}{2}-\frac{b}{2}\sqrt{p}>\left|\frac{c}{2}\sqrt{q}-\frac{d}{2}\sqrt{r}\right|.
\]

The semiconvergent $x+y\sqrt{p}$ is not a best approximation of the second kind, so there exist $u,v\in\N_{0}$ such that
\[
|u-v\sqrt{p}|<x-y\sqrt{p},
\]
where $1\leq v\leq y$ and $\frac{u}{v}\neq\frac{x}{y}$. We can rewrite $x+y\sqrt{p}$ as
\[
x+y\sqrt{p}=x_n+y_n\sqrt{p}+l(x_{n+1}+y_{n+1}\sqrt{p}),
\]
where $l\in\N$, $x_n+y_n\sqrt{p}$ and $x_{n+1}+y_{n+1}\sqrt{p}$ are consecutive convergents of $\sqrt{p}$ and $n$ is odd.

We claim that 
\[
|x_{n+1}-y_{n+1}\sqrt{p}|\leq |u-v\sqrt{p}|
\]
for every $u,v\in\N_{0}$ such that $1\leq v\leq y$ and $|u-v\sqrt{p}|\leq x-y\sqrt{p}$. It follows from the facts that $x_{n+1}+y_{n+1}\sqrt{p}$ is a best approximation of the second kind and the next lowest value is $|x_{n+2}-y_{n+2}\sqrt{p}|$, for which $y_{n+2}>y$.

Let $t>0$. The inequality
\[
t(y_{n+1}\sqrt{p}-x_{n+1})\leq x_n-y_n\sqrt{p}+l(x_{n+1}-y_{n+1}\sqrt{p})=x-y\sqrt{p}
\] 
is equivalent to
\[
0\leq\underbrace{x_n-y_n\sqrt{p}}_{>0}+(l+t)(\underbrace{x_{n+1}-y_{n+1}\sqrt{p}}_{<0}).
\]
We will show that the inequality cannot hold if $l+t>u_{n+2}+1$, where $[u_0,\overline{u_1,u_2,\dots, u_{s-1}, u_s}]$ is the continued fraction of $\sqrt{p}$. To prove this, we rewrite $l+t$ as $u_{n+2}+z$ where $z>1$. Then
\[
x_n-y_n\sqrt{p}+(u_{n+2}+z)(x_{n+1}-y_{n+1}\sqrt{p})=\underbrace{x_{n+2}-y_{n+2}\sqrt{p}}_{>0}+z(\underbrace{x_{n+1}-y_{n+1}\sqrt{p}}_{<0})<0,
\]
which follows from the fact that $x_{n+2}-y_{n+2}\sqrt{p}<|x_{n+1}-y_{n+1}\sqrt{p}|$. 

Let us take $\frac{\sqrt{q}}k$ for $t$, $kc$ for $u$ and $d$ for $v$ above. For any pair $kc$ and $d$, one of the following options occurs. Either $x-y\sqrt{p}< |kc-d\sqrt{p}|$, which leads to the similar situation as in \ref{th:conve}, or $|kc-d\sqrt{p}|\leq x-y\sqrt{p}$, which we discussed in the previous part of the proof.  

Then, since we assume that  $\frac{\sqrt{q}}{k}>u_{n+2}$, we have
\[
\frac{a}{2}-\frac{b}{2}\sqrt{p}\leq\frac{x}{2}-\frac{y}{2}\sqrt{p}<\frac{\sqrt{q}}{2k} (y_{n+1}\sqrt{p}-x_{n+1})\leq\frac{\sqrt{q}}{k}\left|\frac{kc}{2}-\frac{d}{2}\sqrt{p}\right|=\left|\frac{c}{2}\sqrt{q}-\frac{d}{2}\sqrt{r}\right|
\] 
for all $d\leq y$, which is a contradiction. 

Therefore, if 
\[
\sqrt{q}>k\max\{u_i;\; i\text{ odd, }[u_0,\overline{u_1,u_2,\dots, u_{s-1}, u_s}] \text{ is the continued fraction of } \sqrt{p} \},
\]
then all totally positive semiconvergents of $\sqrt{p}$ are indecomposable in $\Q(\sqrt{p},\sqrt{q})$.

The proof of case 4 is similar.
\end{proof}

Note that in case 4 of the previous theorem, $p$ and $q$ are interchangeable.

Summarizing our propositions and collecting the conclusions about the three quadratic subfields, we get the proof of Theorem \ref{Indecomposables}.

\begin{proof}[Proof of Theorem \ref{Indecomposables}]
First note that the condition $\sqrt{r}>\sqrt{p}$ is equivalent to the condition $\sqrt{q}>\gcd(p,q)$, and similarly for the other cases. Part (a) follows directly from Proposition \ref{th:conve} and Proposition \ref{th:semi}. Part (b) follows from the same propositions using the fact that in the case 4., $p$ and $q$ are interchangeable, and from Proposition \ref{prop:case q}. In part (c) we are using the fact that $p$ and $r$ are interchangeable.
\end{proof}

\section{Estimates for algorithmic computations}

The estimates from this section allow us to convert infinite problems to finite ones, which can be solved algorithmically.

The following lemma can be useful when investigating if an element from a quadratic field that is not a square can become a square in a biquadratic extension.

\begin{lemma} \label{squares} 
Let $F=\Q(\sqrt{p})$ and $\alpha=a+b\sqrt{p}\in\OK F$ with $a,b\neq 0$. Suppose that $\alpha$ is not a square in $\OK F$ but becomes a square in $K=\BQ p q$. Then if $\beta=x+y\sqrt{p}+z\sqrt{q}+w\sqrt{r}\in\OK{K}$ is such that $\alpha=\beta^2,$ it must hold that $x=y=0.$
\end{lemma}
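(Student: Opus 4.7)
My plan is direct computation: expand $\beta^2$ in the $\Q$-basis $\{1,\sqrt p,\sqrt q,\sqrt r\}$ of $K$ and compare coefficients with $\alpha=a+b\sqrt p$, exploiting the fact that the mixed radicals simplify cleanly. Writing $k=\gcd(p,q)$ (so $r=pq/k^2$), one has
\[\sqrt{pq}=k\sqrt r,\qquad\sqrt{pr}=\tfrac{p}{k}\sqrt q,\qquad\sqrt{qr}=\tfrac{q}{k}\sqrt p.\]
After expansion and collecting in this basis, the $\sqrt q$- and $\sqrt r$-components of $\beta^2$ turn out to be $2xz+2\tfrac{p}{k}yw$ and $2xw+2kyz$, respectively.

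Since the $\sqrt q$- and $\sqrt r$-components of $\alpha$ both vanish, this yields the homogeneous linear system
\[\begin{pmatrix}z & pw/k\\ w & kz\end{pmatrix}\begin{pmatrix}x\\ y\end{pmatrix}=\begin{pmatrix}0\\ 0\end{pmatrix}\]
in the unknowns $x,y$, whose determinant is $(k^2z^2-pw^2)/k$. Whenever this determinant is nonzero, Cramer's rule forces $x=y=0$, which is exactly the desired conclusion. The remaining task is thus to rule out the degeneracy $k^2z^2=pw^2$.

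Two subcases handle the degeneracy. If $w=0$, the equation forces $z=0$, so $\beta=x+y\sqrt p\in F\cap\OK K=\OK F$, making $\alpha=\beta^2$ a square in $\OK F$ and contradicting the standing hypothesis that $\alpha$ is not a square in $\OK F$. If $w\neq 0$, then $p=(kz/w)^2$ is the square of a rational, hence a perfect-square integer; this contradicts $p$ being squarefree (the case $p=1$ is excluded since $b\neq 0$ forces $\sqrt p\notin\Q$). The only mild obstacle in the argument is keeping the expansion of $\beta^2$ tidy; once the cross-radical identities are invoked, the proof reduces to elementary $2\times 2$ linear algebra combined with the standing hypotheses on $p$ and $\alpha$.
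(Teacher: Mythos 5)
Your proof is correct, and it resolves the crucial system by a genuinely different and arguably cleaner mechanism than the paper. Both arguments begin the same way: expanding $\beta^2$ via $\sqrt{pq}=k\sqrt r$, $\sqrt{pr}=\frac pk\sqrt q$, $\sqrt{qr}=\frac qk\sqrt p$ and equating the $\sqrt q$- and $\sqrt r$-coefficients to zero gives exactly $xz+yw\frac pk=0$ and $xw+kyz=0$. The paper then picks a prime $p_1$ dividing $p$ but not $q$, compares $p_1$-adic valuations to force $xyzw=0$, runs a case analysis on which factor vanishes (which also uses the $\sqrt p$-coefficient equation $b=2xy+2zw\frac qk$ and the hypothesis $b\neq0$), and must treat $p\mid q$ in a separate branch. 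You instead read the two equations as a homogeneous linear system in $(x,y)$ with determinant $(k^2z^2-pw^2)/k$: the determinant can vanish only if $w=z=0$ (whence $\beta=x+y\sqrt p\in F\cap\OK K=\OK F$, contradicting that $\alpha$ is not a square in $\OK F$) or if $\sqrt p=|kz/w|\in\Q$, impossible for squarefree $p>1$. This buys you a proof with no valuation argument, no case split on $p\mid q$, no use of the equation for $b$, and no bookkeeping about half- versus quarter-integers (the identity $F\cap\OK K=\OK F$ does that work). One cosmetic slip: the reason $p=1$ is excluded is not that ``$b\neq 0$ forces $\sqrt p\notin\Q$,'' but simply that $\BQ pq$ is assumed to be a genuine biquadratic field, so $p$ is squarefree and greater than $1$; with that phrasing repaired the argument is complete.
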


\begin{proof}
	We have $$\begin{aligned}a+b\sqrt{p}&=(x+y\sqrt{p}+z\sqrt{q}+w\sqrt{r})^2=\\&=x^2+py^2+qz^2+rw^2+2y\sqrt{p}(z\sqrt{q}+w\sqrt{r}+x)+2z\sqrt{q}(w\sqrt{r}+x)+2xw\sqrt{r},\end{aligned}$$ where all the coefficients $a,b,x,y,z,w\in\frac 14\Z$.
    Let $k=\gcd(p,q)$. Using the fact that $r=\frac{pq}{k^2}$ and comparing the coefficients of different square roots in the equation above, we get the following system of equations:
\begin{eqnarray} 
    a & = &x^2+py^2+qz^2+rw^2 \label{1}\ ,\\
    b & = &2xy+2zw\frac q k \label{2}\ ,\\
    0 & = &xz+yw\frac p k \label{3}\ ,\\
    0 & = &xw+kyz\label{4}.
\end{eqnarray}

First suppose that $p\nmid q,$ so that there exists a prime $p_1$ such that $p_1\mid p$ and $p_1\nmid q$ (so also $p_1\nmid k$). For a rational number $s$, let $v_{p_1}(s)$ denote its $p_1$-adic valuation. From equations \eqref{3} and \eqref{4}, we get respectively \begin{eqnarray*}v_{p_1}(x)+v_{p_1}(z) &=& v_{p_1}(y) + v_{p_1}(w) + 1 \ ,\\ v_{p_1}(x) + v_{p_1}(w) &=& v_{p_1}(y) + v_{p_1}(z). \end{eqnarray*} If $xyzw\neq0,$ all quantities above are finite, so we can add the equations together. This yields $2v_{p_1}(x)=2v_{p_1}(y)+1,$ which is not possible. Hence it must hold that $xyzw=0$.

If $w=0$, since we assumed that $b\neq0$, $\eqref{2}$ gives that $xy\neq0$. Then from $\eqref{3}$ we see that $z=0$, so we get $a+b\sqrt{p}=(x+y\sqrt{p})^2$. If we further note that $x,y$ can be half-integers only if $a$ or $b$ is a half-integer, we can conclude that $x+y\sqrt{p}\in \OK F$, which contradicts the assumption that $\alpha$ is not a square.

If $z=0$, we again get from $\eqref{2}$ that $xy\neq0$ and from $\eqref{3}$ that $w=0,$ leading to the same contradiction as above.

The last possibility is when $xy=0$. Then $\eqref{2}$ gives us that $zw\neq 0$, so from $\eqref{3}$ we see that both $x$ and $y$ are $0$, which we wanted to prove.

It remains to solve the case when $p|q$. Then $k=p$ and we can rewrite equations $\eqref{3}$ and $\eqref{4}$ as follows: \begin{eqnarray*}0 &=&xz+yw,\\ 0 &=& xw+pyz.\end{eqnarray*} We can proceed in a similar way as above, for $p_1$ taking an arbitrary prime divisor of $p$.
\end{proof}

It is worth noting that Lemma \ref{squares} implies that the square root of $x+y\sqrt{p}$ in the biquadratic field $K$, if exists, is never totally positive. Hence, if an indecomposable element from a quadratic field becomes square in a biquadratic field, then its square root cannot be indecomposable.

To give some estimates on rational coefficients of an element of $\OK{K}$ (which satisfies some initial condition), we often take advantage of the trace function, as in the following lemma.

\begin{lemma}\label{Lemma:TraceSquare}
Let $\alpha, \beta \in \OK{K}$ and $\beta=a+b\sqrt{p}+c\sqrt{q}+d\sqrt{r}$ for some $a, b, c, d\in\Q$.

\begin{enumerate}[(a)]
\item If $0\prec\beta\prec\alpha$, then
\begin{equation}\label{Eq:estimates1}
|a|\leq \frac14\Tr{\alpha}, \  \ \ 
|b|\leq \frac1{4\sqrt p}\Tr{\alpha}, \ \ \ 
|c|\leq \frac1{4\sqrt q}\Tr{\alpha}, \ \ \ 
|d|\leq \frac1{4\sqrt r}\Tr{\alpha}.
\end{equation}

\item If $\beta^2\prec\alpha$, then
\begin{equation}\label{Eq:estimates}
a^2\leq \frac14\Tr{\alpha}, \  \ \ 
b^2\leq \frac1{4p}\Tr{\alpha}, \ \ \ 
c^2\leq \frac1{4q}\Tr{\alpha}, \ \ \ 
d^2\leq \frac1{4r}\Tr{\alpha}.
\end{equation}
\end{enumerate}
\end{lemma}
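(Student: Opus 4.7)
The proof reduces in both parts to computing a trace in closed form and then using that totally positive elements have positive trace.

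For part (a), summing the four embeddings $\sigma_1,\dots,\sigma_4$ listed in Section~2 gives $\Tr{\beta}=4a$, since the $\sqrt p$, $\sqrt q$, $\sqrt r$ contributions cancel pairwise. From $0\prec\beta\prec\alpha$ we see that $\beta$ and $\alpha-\beta$ are both totally positive, so $0<4a\leq\Tr{\alpha}$. Applying Lemma~\ref{lemma:totpos} to $\beta\succ 0$ gives $a\geq|b|\sqrt p$, $a\geq|c|\sqrt q$, $a\geq|d|\sqrt r$, and combining with $a\leq\tfrac14\Tr{\alpha}$ yields the remaining three estimates of \eqref{Eq:estimates1}.

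For part (b), the key identity is
\[
\Tr{\beta^2}=4\bigl(a^2+pb^2+qc^2+rd^2\bigr).
\]
To prove it, write $\sigma_i(\beta)=a+\varepsilon_{i,1}b\sqrt p+\varepsilon_{i,2}c\sqrt q+\varepsilon_{i,3}d\sqrt r$ with sign patterns $\varepsilon_{i,\cdot}\in\{\pm1\}^3$ read off from Section~2. Squaring and summing over $i=1,\dots,4$, the diagonal terms contribute $4(a^2+pb^2+qc^2+rd^2)$, while each of the six off-diagonal cross terms has coefficient of the form $\sum_{i=1}^4\varepsilon_{i,j}\varepsilon_{i,k}$ (or $\sum_i\varepsilon_{i,j}$, if the pairing involves the rational coordinate), and a quick inspection of the sign table shows every such sum vanishes. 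This identity, combined with $\Tr{\beta^2}\leq\Tr{\alpha}$ coming from $\beta^2\prec\alpha$, and the non-negativity of each summand $a^2$, $pb^2$, $qc^2$, $rd^2$, then yields \eqref{Eq:estimates}.

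\textbf{Expected obstacle.} Neither part is technically delicate; the only bookkeeping is verifying that all six sign-sums in the trace of $\beta^2$ vanish, which is an immediate consequence of the explicit form of the embeddings in Section~2.
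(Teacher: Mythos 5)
Your proof is correct and follows essentially the same route as the paper: part (a) combines $\Tr{\beta}=4a\leq\Tr{\alpha}$ with Lemma~\ref{lemma:totpos}, and part (b) uses the identity $\Tr{\beta^2}=4(a^2+pb^2+qc^2+rd^2)$ together with the non-negativity of each summand. The paper states these same facts more tersely, without writing out the sign-table verification, but the argument is identical.
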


\begin{proof}
Part (a) follows from Lemma \ref{lemma:totpos} and from the fact that $\beta\prec\alpha$ implies $\Tr{\beta}<\Tr{\alpha}$. To prove part (b), note that $\Tr{\beta^2}<\Tr{\alpha}$ and $\Tr{\beta^2}=4\left(a^2+b^2p+c^2q+d^2r\right)$; therefore,
$$a^2,\; b^2p,\; c^2q,\; d^2r \leq \frac14\Tr{\beta^2} < \frac14\Tr{\alpha}. $$
\end{proof}

Part (a) of this lemma can be  directly applied to check whether a given $\alpha\in\OKPlus{K}$ is indecomposable.

\begin{corollary}\label{Cor:indecomposables}
Let $\alpha\in\OKPlus{K}$, and suppose that there do not exist $a,b,c,d\in\Q$ satisfying the inequalities in (\ref{Eq:estimates1}), and such that $a+b\sqrt{p}+c\sqrt{q}+d\sqrt{r}\in\OK{K}\setminus\{0\}$. Then $\alpha$ is indecomposable.
\end{corollary}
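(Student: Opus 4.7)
The plan is to argue by contrapositive: assume $\alpha$ is decomposable and exhibit an element $\beta \in \OK{K}\setminus\{0\}$ whose coefficients in the rational basis $1,\sqrt{p},\sqrt{q},\sqrt{r}$ satisfy the inequalities (\ref{Eq:estimates1}). This will directly contradict the hypothesis. Since this is a one-step contrapositive reading of Lemma \ref{Lemma:TraceSquare}(a), the proof is essentially bookkeeping; there is no real obstacle.

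Concretely, suppose $\alpha = \beta + \gamma$ with $\beta, \gamma \in \OKPlus{K}$. Write $\beta = a + b\sqrt{p} + c\sqrt{q} + d\sqrt{r}$ with $a,b,c,d \in \Q$. First I would check that $\beta$ falls under the hypotheses of Lemma \ref{Lemma:TraceSquare}(a). On the one hand, $\beta \succ 0$ because $\beta \in \OKPlus{K}$. On the other hand, $\alpha - \beta = \gamma \in \OKPlus{K}$ gives $\alpha - \beta \succ 0$, i.e.\ $\beta \prec \alpha$. Hence $0 \prec \beta \prec \alpha$, and Lemma \ref{Lemma:TraceSquare}(a) yields
\[
|a|\leq \tfrac14\Tr{\alpha}, \quad |b|\leq \tfrac{1}{4\sqrt{p}}\Tr{\alpha}, \quad |c|\leq \tfrac{1}{4\sqrt{q}}\Tr{\alpha}, \quad |d|\leq \tfrac{1}{4\sqrt{r}}\Tr{\alpha}.
\]

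Finally, I would observe that $\beta \neq 0$, since a totally positive element cannot be zero (its embeddings are strictly positive reals). Therefore $\beta \in \OK{K} \setminus \{0\}$ is an element of the excluded type, contradicting the assumption of the corollary. It follows that no such decomposition $\alpha = \beta + \gamma$ exists, and $\alpha$ is indecomposable.
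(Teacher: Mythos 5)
Your argument is correct and is exactly the intended one: the paper states the corollary as a direct application of Lemma \ref{Lemma:TraceSquare}(a) without writing out a proof, and your contrapositive reading (a summand $\beta$ of a decomposition satisfies $0\prec\beta\prec\alpha$, hence its coefficients obey (\ref{Eq:estimates1}) and $\beta\neq 0$) fills in precisely that omitted bookkeeping. Nothing to add.
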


In the escalation method (that will be outlined in Section \ref{Section:Examples}), we are interested in finding all possible non-diagonal coefficients such that the resulting quadratic form is totally positive definite. The next proposition gives a necessary condition for such coefficients.

\begin{proposition} \label{Prop:nondiagonalcoef}
Let 
$$Q(\mathbf{x})=\sum_{i,j=1}^{n}{\alpha_{ij}x_ix_j}$$ 
be a classical totally positive definite quadratic form with coefficients from $\OK{K}$. Then 
$$\alpha_{ij}^2\prec\alpha_{ii}\alpha_{jj}$$
for every $i\neq j$. If  $\alpha_{ij}=a_{ij}+b_{ij}\sqrt{p}+c_{ij}\sqrt{q}+d_{ij}\sqrt{r}$ for some $a_{ij}, b_{ij}, c_{ij}, d_{ij}\in\Q$, then 
$$
a_{ij}^2\leq \frac14\Tr{\alpha_{ii}\alpha_{jj}}, \  \ \ 
b_{ij}^2\leq \frac1{4p}\Tr{\alpha_{ii}\alpha_{jj}}, \ \ \ 
c_{ij}^2\leq \frac1{4q}\Tr{\alpha_{ii}\alpha_{jj}}, \ \ \ 
d_{ij}^2\leq \frac1{4r}\Tr{\alpha_{ii}\alpha_{jj}}.
$$
\end{proposition}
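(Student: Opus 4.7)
The plan is to first establish the key inequality $\alpha_{ij}^2\prec\alpha_{ii}\alpha_{jj}$ for every $i\neq j$ by restricting $Q$ to the two-variable sub-form in the variables $x_i$ and $x_j$, and then to read off the coefficient bounds as an immediate application of Lemma~\ref{Lemma:TraceSquare}(b).

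First I would fix indices $i\neq j$ and set $x_k=0$ for all $k\notin\{i,j\}$, so that evaluating $Q$ on $\OK{K}^n$ reduces to evaluating the binary form $R(x_i,x_j)=\alpha_{ii}x_i^2+2\alpha_{ij}x_ix_j+\alpha_{jj}x_j^2$ on $\OK{K}^2$. For any real embedding $\sigma$ of $K$, applying $\sigma$ turns $R$ into a real binary quadratic form with coefficients $\sigma(\alpha_{ii}),\sigma(\alpha_{ij}),\sigma(\alpha_{jj})\in\mathbb R$. Since $Q$ is totally positive definite, this real form takes strictly positive values at every nonzero $(m,n)\in\Z^2\subseteq\OK{K}^2$.

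Next I would upgrade positivity on $\Z^2\setminus\{0\}$ to positivity on $\mathbb R^2\setminus\{0\}$: any nonzero $(x,y)\in\Q^2$ can be cleared of denominators by multiplying by a nonzero integer, and $R$ is homogeneous of degree~$2$; therefore positivity holds on $\Q^2\setminus\{0\}$, and then on $\mathbb R^2\setminus\{0\}$ by continuity. The real binary form is thus positive definite, so its discriminant is positive, giving $\sigma(\alpha_{ii})\sigma(\alpha_{jj})-\sigma(\alpha_{ij})^2>0$, i.e.\ $\sigma\bigl(\alpha_{ii}\alpha_{jj}-\alpha_{ij}^2\bigr)>0$. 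Because this holds for every embedding $\sigma$, we conclude $\alpha_{ii}\alpha_{jj}-\alpha_{ij}^2\succ 0$, that is $\alpha_{ij}^2\prec\alpha_{ii}\alpha_{jj}$.

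Finally, setting $\beta=\alpha_{ij}$ and $\alpha=\alpha_{ii}\alpha_{jj}\in\OK{K}$, the relation $\beta^2\prec\alpha$ is exactly the hypothesis of Lemma~\ref{Lemma:TraceSquare}(b); writing $\beta=a_{ij}+b_{ij}\sqrt p+c_{ij}\sqrt q+d_{ij}\sqrt r$ and applying part~(b) of that lemma directly yields the four displayed bounds. No serious obstacle is expected; the only mildly delicate step is the passage from integer positivity to real positive-definiteness of the binary sub-form, which is handled cleanly by the rescaling-and-continuity argument above.
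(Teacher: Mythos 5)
Your overall strategy is the same as the paper's: restrict to the $2\times2$ principal submatrix in the variables $x_i,x_j$, deduce that its determinant $\alpha_{ii}\alpha_{jj}-\alpha_{ij}^2$ is totally positive, and then apply Lemma~\ref{Lemma:TraceSquare}(b). The paper simply asserts the total positivity of the $2\times2$ minor, whereas you try to justify it, and that is where there is a genuine gap. The passage from strict positivity of $\sigma(R)$ on $\Q^2\setminus\{0\}$ to positive definiteness on $\mathbb{R}^2\setminus\{0\}$ ``by continuity'' is not valid: a limit of strictly positive values is only nonnegative, so continuity gives positive \emph{semi}definiteness and hence only $\sigma(\alpha_{ii}\alpha_{jj}-\alpha_{ij}^2)\geq 0$, i.e.\ the non-strict conclusion. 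For instance, $(x-\sqrt2\,y)^2$ is strictly positive at every nonzero point of $\Q^2$ yet degenerate over $\mathbb{R}$, so your argument as written would ``prove'' that it is positive definite. The underlying issue is that you only invoke the hypothesis at points of $\Z^2$ (equivalently $\Q^2$), which is strictly weaker than total positive definiteness on $\OK{K}^2$.

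The gap closes once you use the full hypothesis. Your density argument does give $\sigma(\alpha_{ii}\alpha_{jj}-\alpha_{ij}^2)\geq 0$ for every embedding $\sigma$. If equality held for some $\sigma$, then $\alpha_{ii}\alpha_{jj}-\alpha_{ij}^2=0$ in $\OK{K}$, because $\sigma$ is an injective field homomorphism. But then
$R(\alpha_{ij},-\alpha_{ii})=\alpha_{ii}\bigl(\alpha_{ii}\alpha_{jj}-\alpha_{ij}^2\bigr)=0$,
and $(\alpha_{ij},-\alpha_{ii})$ is a nonzero pair in $\OK{K}^2$ (note $\alpha_{ii}=R(1,0)\succ0$), contradicting total positive definiteness of $Q$. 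Hence every $\sigma(\alpha_{ii}\alpha_{jj}-\alpha_{ij}^2)$ is strictly positive and $\alpha_{ij}^2\prec\alpha_{ii}\alpha_{jj}$; the final application of Lemma~\ref{Lemma:TraceSquare}(b) is then exactly as in the paper.
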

\begin{proof}
The matrix $\left(\alpha_{ij}\right)_{i,j=1}^n$ is totally positive definite, and hence the $2\times 2$ minor
$$
\begin{vmatrix}
\alpha_{ii} & \alpha_{ij}\\
\alpha_{ij} & \alpha_{jj}
\end{vmatrix}
$$
has to be totally positive for every $i\neq j$; therefore, $\alpha_{ij}^2\prec\alpha_{ii}\alpha_{jj}$. 
The rest follows from part (b) of Lemma \ref{Lemma:TraceSquare}.
\end{proof}

Another question arising in the escalation method is to find elements which are not yet represented by the quadratic form; the following lemma describes some necessary conditions for an element of $\OKPlus{K}$ to be represented.

\begin{proposition}\label{Prop:BoundsForRepresentability}
Let $\gamma\in\OKPlus{K}$, and let $Q$ be a classical totally positive definite quadratic form with $n$ variables and coefficients from $\OK{K}$ representing $\gamma$. Let $\eta_i\in\OK{K}$, $i=1,\dots, n$, be such that $Q(\eta_1, \dots, \eta_n)=\gamma$, and let $\eta_i=a_i+b_i\sqrt{p}+c_i\sqrt{q}+d_i\sqrt{r}$ for some $a_i, b_i, c_i, d_i \in \Q$, $i=1, \dots, n$.
\begin{enumerate}[(a)]
	\item If $Q$ is diagonal, $Q(\textbf{x})=\sum_{i=1}^n{\alpha_ix_i^2}$, then 
    		$$\eta_i^2\preceq\frac{\gamma}{\alpha_i}.$$
            In particular, 
            $$ a_i^2\leq\frac14\Tr{\frac{\gamma}{\alpha_i}}, \ \ 
            	b_i^2\leq\frac{1}{4p}\Tr{\frac{\gamma}{\alpha_i}}, \ \ 
                c_i^2\leq\frac{1}{4q}\Tr{\frac{\gamma}{\alpha_i}}, \ \ 
                d_i^2\leq\frac{1}{4r}\Tr{\frac{\gamma}{\alpha_i}}.$$
    \item In the general case, we have  
    		$$\Tr{\eta_i^2} \leq \sum_{k=1}^4{\sum_{j=1}^n{\frac{\sigma_k(\gamma)}{\lambda_j^{(k)}}}},$$ where $\lambda_1^{(k)}, \dots, \lambda_n^{(k)}$ are the eigenvalues of the matrix $\sigma_k(Q)$. In particular,
            $$ a_i^2 \leq \frac{1}{4}\sum_{k=1}^4{\sum_{j=1}^n{\frac{\sigma_k(\gamma)}{\lambda_j^{(k)}}}}, \ \ 
b_i^2 \leq \frac{1}{4p}\sum_{k=1}^4{\sum_{j=1}^n{\frac{\sigma_k(\gamma)}{\lambda_j^{(k)}}}}, \ \ 
c_i^2 \leq \frac{1}{4q}\sum_{k=1}^4{\sum_{j=1}^n{\frac{\sigma_k(\gamma)}{\lambda_j^{(k)}}}}, \ \ 
d_i^2 \leq \frac{1}{4r}\sum_{k=1}^4{\sum_{j=1}^n{\frac{\sigma_k(\gamma)}{\lambda_j^{(k)}}}}$$
\end{enumerate}
\end{proposition}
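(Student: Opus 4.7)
Part (a) is immediate from the structure of a diagonal form. Since each $\alpha_j\succ 0$ and each $\eta_j^2\succeq 0$, the identity $\gamma=\sum_{j=1}^n\alpha_j\eta_j^2$ forces $\alpha_i\eta_i^2\preceq\gamma$, and dividing by $\alpha_i\succ 0$ gives $\eta_i^2\preceq\gamma/\alpha_i$. Taking traces, $\Tr{\eta_i^2}\leq\Tr{\gamma/\alpha_i}$, and since $\Tr{\eta_i^2}=4(a_i^2+b_i^2 p+c_i^2 q+d_i^2 r)$ is a sum of four nonnegative terms, each of the four coordinate bounds follows, exactly as in the proof of Lemma \ref{Lemma:TraceSquare}(b).

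For part (b), I work one embedding at a time. Applying $\sigma_k$ entry-wise to the symmetric matrix of $Q$ and to the identity $Q(\eta_1,\dots,\eta_n)=\gamma$ yields
\[
\mathbf{x}_k^{\,T}\,\sigma_k(Q)\,\mathbf{x}_k=\sigma_k(\gamma),\qquad \mathbf{x}_k:=\bigl(\sigma_k(\eta_1),\dots,\sigma_k(\eta_n)\bigr)^{T}.
\]
Total positive definiteness of $Q$ implies that each $\sigma_k(Q)$ is a real symmetric positive-definite matrix, so by the spectral theorem there is an orthogonal matrix $U_k$ with $U_k^{T}\sigma_k(Q)\,U_k=\mathrm{diag}(\lambda_1^{(k)},\dots,\lambda_n^{(k)})$ and all $\lambda_j^{(k)}>0$. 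In the rotated variables $\mathbf{y}_k=U_k^{T}\mathbf{x}_k$ the identity decouples to $\sum_j\lambda_j^{(k)}y_{k,j}^2=\sigma_k(\gamma)$, which forces $y_{k,j}^2\leq \sigma_k(\gamma)/\lambda_j^{(k)}$ for each $j$.

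Orthogonality of $U_k$ preserves norms, so for every fixed $i$,
\[
\sigma_k(\eta_i)^2\leq\sum_{\ell=1}^n\sigma_k(\eta_\ell)^2=\|\mathbf{x}_k\|^2=\|\mathbf{y}_k\|^2=\sum_{j=1}^n y_{k,j}^2\leq\sum_{j=1}^n\frac{\sigma_k(\gamma)}{\lambda_j^{(k)}}.
\]
Summing over the four embeddings, $\Tr{\eta_i^2}=\sum_{k=1}^4\sigma_k(\eta_i)^2\leq\sum_{k=1}^4\sum_{j=1}^n \sigma_k(\gamma)/\lambda_j^{(k)}$, and the coordinate estimates then fall out from $\Tr{\eta_i^2}=4(a_i^2+b_i^2 p+c_i^2 q+d_i^2 r)$ together with nonnegativity of each of the four summands, exactly as in part (a). I expect the only conceptual point to require care is the observation that the four matrices $\sigma_k(Q)$ are diagonalized \emph{independently}, by possibly different orthogonal transformations $U_k$: no simultaneous diagonalization is needed, because the four separate eigenvalue bounds are glued together only at the very end through the Galois trace.
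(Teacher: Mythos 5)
Your proof is correct and follows essentially the same route as the paper's: part (a) via $\alpha_i\eta_i^2\preceq\gamma$ and the trace identity $\Tr{\eta_i^2}=4(a_i^2+b_i^2p+c_i^2q+d_i^2r)$, and part (b) via orthogonal diagonalization of each $\sigma_k(Q)$ separately, the eigenvalue bounds on the rotated coordinates, norm preservation, and summing over the four embeddings. No substantive differences.
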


\begin{proof}
(a) It follows from the totally positive definiteness of $Q$ that $\alpha_i\succ0$ for every $i=1,\dots, n$. Thus, 
$$\alpha_i\eta_i^2\preceq \sum_{j=1}^n{\alpha_j\eta_j^2} = \gamma,$$
and hence
$$\eta_i^2\preceq\frac{\gamma}{\alpha_i}.$$
The rest follows from part (b) of Lemma \ref{Lemma:TraceSquare}.

To prove (b), let $Q(\mathbf{x})=\sum_{i,j=1}^n{\alpha_{ij}x_ix_j}$, and for $k=1, \dots, 4$ denote $A^{(k)}=\left(\sigma_k(\alpha_{ij})\right)_{i,j=1}^n$. Clearly, the matrices $A^{(k)}$ are symmetric and positive definite, and hence orthogonal diagonalizable. Thus, for every $k=1, \dots, 4$, there exists a matrix $U^{(k)}$ such that $\left(U^{(k)}\right)^tA^{(k)}U^{(k)}$ is diagonal, and $\left(U^{(k)}\right)^tU^{(k)}=I_n$. Then $\left(U^{(k)}\right)^tA^{(k)}U^{(k)}=diag\left(\lambda_1^{(k)}, \dots, \lambda_n^{(k)} \right)$, where $\lambda_1^{(k)}, \dots, \lambda_n^{(k)}$ are the eigenvalues of the matrix $A^{(k)}$. Note that $\lambda_i^{(k)}>0$ for every $i=1, \dots n$ and $k=1, \dots, 4$. Let $\eta^{(k)}=(\sigma_k(\eta_1), \dots, \sigma_k(\eta_n))^t$, and $\rho^{(k)}=\left(\rho_1^{(k)}, \dots,\rho_n^{(k)}\right)=\left(U^{(k)}\right)^t\eta^{(k)}$; then \begin{multline*}
\sigma_k(\gamma)
=\left(\eta^{(k)}\right)^tA\eta^{(k)}
=\left(\eta^{(k)}\right)^tU^{(k)}diag\left(\lambda_1^{(k)}, \dots, \lambda_n^{(k)}\right)\left(U^{(k)}\right)^t\eta^{(k)} \\
=\left(\rho^{(k)}\right)^tdiag\left(\lambda_1^{(k)}, \dots, \lambda_n^{(k)}\right)^t\rho^{(k)}
=\sum_{i=1}^n{\lambda_i^{(k)}\left(\rho_i^{(k)}\right)^2}.
\end{multline*}
Therefore,
$$\left(\rho_i^{(k)}\right)^2\leq \frac{\sigma_k(\gamma)}{\lambda_i^{(k)}}$$
for every $i=1, \dots, n$ and $k=1,\dots,4$. Moreover, $\left\|\eta^{(k)}\right\|=\left\|\left(U^{(k)}\right)^t\eta^{(k)}\right\|$ by the orthogonality of $U^{(k)}$; thus,
\begin{multline*}
\sigma_k(\eta_i^2)
=\sigma_k(\eta_i)^2
\leq\sum_{j=1}^n{\sigma_k(\eta_j)^2}
=\left\|\eta^{(k)}\right\|^2
=\left\|\left(U^{(k)}\right)^t\eta^{(k)}\right\|^2
=\left\|\rho^{(k)}\right\|^2
=\sum_{j=1}^n{\left(\rho_j^{(k)}\right)^2}
\leq \sum_{j=1}^n\frac{\sigma_k(\gamma)}{\lambda_j^{(k)}}.
\end{multline*}
Summing over all $k$'s, we get
$$\Tr{\eta_i^2}=\sum_{k=1}^4{\sigma_k(\eta_i^2)}
\leq \sum_{k=1}^4{\sum_{j=1}^n{\frac{\sigma_k(\gamma)}{\lambda_j^{(k)}}}};$$
the rest follows from the fact that $a_i^2+b_i^2p+c_i^2q+d_i^2r=\frac14\Tr{\eta_i^2}$.
\end{proof}

\section{Examples}\label{Section:Examples}

In this final section, we will construct certain (classical, totally positive definite) quadratic forms over some particular biquadratic number fields. For that, we will use the escalation method (see \cite{BH}): starting with the quadratic form $Q_1(\mathbf{x})=x_1^2$ and proceeding in steps, we will add as coefficients of the new variables exactly the elements, which are not represented yet. As we would like to obtain a lower bound on the number of variables of a universal quadratic form, we need to be careful and consider all possible quadratic forms: when adding a new variable, new nondiagonal terms arise. The only restriction for the coefficients of these nondiagonal terms is that the resulting quadratic form has to be totally positive definite (and classical); that is where Proposition \ref{Prop:nondiagonalcoef} comes into account. 

Using the escalation method, one can prove the following proposition, which is a version of \cite[Proposition 2]{Ka} for classical quadratic forms.

\begin{proposition}\label{nondiagonals}
Assume that there exist totally positive elements $\alpha_1, \alpha_2, \dots, \alpha_n\in\OK{K}$ such that for all $1\leq i\neq j\leq n$ the following holds: if $\alpha_i\alpha_j\succeq\gamma^2$ for $\gamma\in\OK{K}$, then $\gamma=0$. Then there are no universal totally positive $(n-1)$-ary classical quadratic forms over $\OK{K}$.
\end{proposition}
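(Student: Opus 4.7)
The plan is to argue by contradiction: suppose there is a classical totally positive definite universal quadratic form $Q$ in $m\leq n-1$ variables over $\OK{K}$, with associated symmetric bilinear form $B$, so that $Q(\mathbf{v})=B(\mathbf{v},\mathbf{v})$ and, because $Q$ is classical, $B(\mathbf{v},\mathbf{w})\in\OK{K}$ whenever $\mathbf{v},\mathbf{w}\in\OK{K}^m$ (either from the matrix form or from polarization $2B(\mathbf{v},\mathbf{w})=Q(\mathbf{v}+\mathbf{w})-Q(\mathbf{v})-Q(\mathbf{w})$). By universality, for every $i=1,\dots,n$ there exists $\mathbf{v}_i\in\OK{K}^m$ with $Q(\mathbf{v}_i)=\alpha_i$. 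The goal is to show that these $n$ vectors are linearly independent, which already yields $n\leq m$, contradicting $m\leq n-1$.

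The key step is a Cauchy--Schwarz-type bound: for each $i\neq j$, the element $\gamma_{ij}:=B(\mathbf{v}_i,\mathbf{v}_j)\in\OK{K}$ satisfies $\gamma_{ij}^{2}\preceq\alpha_i\alpha_j$. I would verify this embedding by embedding: for each $k\in\{1,\dots,4\}$ the matrix $\sigma_k(Q)$ defines a real symmetric positive definite form on $\mathbb{R}^m$, so ordinary Cauchy--Schwarz applied to $\sigma_k(\mathbf{v}_i)$ and $\sigma_k(\mathbf{v}_j)$ gives
$$\sigma_k(\gamma_{ij})^{2}\leq\sigma_k(Q(\mathbf{v}_i))\,\sigma_k(Q(\mathbf{v}_j))=\sigma_k(\alpha_i\alpha_j).$$
Hence $\alpha_i\alpha_j-\gamma_{ij}^{2}\succeq 0$, so the hypothesis on the $\alpha_i$ applies with $\gamma=\gamma_{ij}$ and forces $\gamma_{ij}=0$; the $\mathbf{v}_i$ are therefore pairwise $B$-orthogonal.

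To finish, note that each $\mathbf{v}_i$ is nonzero, since $Q(\mathbf{v}_i)=\alpha_i\succ 0$. Fix any embedding $\sigma_k$; then $\sigma_k(\mathbf{v}_1),\dots,\sigma_k(\mathbf{v}_n)$ are nonzero vectors in $\mathbb{R}^m$ that are pairwise orthogonal with respect to the real positive definite form $\sigma_k(Q)$, hence linearly independent over $\mathbb{R}$. This gives $n\leq m\leq n-1$, the desired contradiction.

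The one subtle point is the Cauchy--Schwarz step, where I rely simultaneously on two hypotheses: classicality, to guarantee $\gamma_{ij}\in\OK{K}$ so that the assumption on the $\alpha_i$ can be applied, and total positive definiteness, so that the real inequality is available under every embedding and can be lifted to a $\preceq$ relation in $\OK{K}$. Everything else is bookkeeping.
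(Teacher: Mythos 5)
Your proof is correct, and it is essentially the argument the paper has in mind: the paper does not write out a proof (it only remarks that the statement follows by escalation, as a classical-forms version of Kala's Proposition 2), but its own Proposition \ref{Prop:nondiagonalcoef} establishes exactly your key inequality $\gamma_{ij}^2\preceq\alpha_i\alpha_j$ via total positivity of the $2\times2$ Gram minor, which is equivalent to your embedding-by-embedding Cauchy--Schwarz step. Your conclusion that the representing vectors are pairwise $B$-orthogonal, nonzero, and hence force at least $n$ variables is the standard completion of that argument and is sound.
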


Note that the quadratic form resulting from this proposition would be diagonal. Setting $\alpha_1=1$, it becomes clear why indecomposable elements are important: if $\alpha_j$ is indecomposable (and it is not a square), then there is no nonzero $\gamma\in\OK{K}$ such that  $1\cdot\alpha_j\succeq\gamma^2$. Furthermore, as long as the quadratic form is diagonal, it is relatively easy to check if an indecomposable element is already represented or not.

\begin{lemma}
Let $Q(\mathbf{x})=\sum_{i=1}^n{\alpha_ix_i^2}$ be a totally positive definite quadratic form over $K$, and let $\beta\in\OK{K}$ be indecomposable. Then $\beta$ is represented by $Q$ if and only if there exists $i\in\{1,\dots, n\}$ such that $\frac{\beta}{\alpha_i}$ is a square in $\OK{K}$.
\end{lemma}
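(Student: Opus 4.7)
The plan is to prove both implications directly, with the forward direction being essentially formal and the reverse direction relying on the definition of indecomposability together with the basic fact that for $\alpha \succ 0$ and $\eta \in \OK{K}$, the product $\alpha\eta^{2}$ is either $0$ (when $\eta = 0$) or totally positive (when $\eta \neq 0$).

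First I would dispose of the easy direction. Suppose $\beta/\alpha_{i} = \eta^{2}$ for some $\eta \in \OK{K}$ and some index $i$. Setting $x_{i} = \eta$ and $x_{j} = 0$ for $j \neq i$ gives $Q(x_{1},\dots,x_{n}) = \alpha_{i}\eta^{2} = \beta$, so $\beta$ is represented by $Q$.

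For the nontrivial direction, assume $\beta = \sum_{i=1}^{n} \alpha_{i}\eta_{i}^{2}$ with $\eta_{i} \in \OK{K}$. The key observation is that each summand $\alpha_{i}\eta_{i}^{2}$ lies in $\OK{K}$ and, conjugate by conjugate, satisfies $\sigma_{k}(\alpha_{i}\eta_{i}^{2}) = \sigma_{k}(\alpha_{i})\,\sigma_{k}(\eta_{i})^{2} \geq 0$, with strict inequality for every $k$ whenever $\eta_{i} \neq 0$ (since $\alpha_{i}$ is totally positive and $\sigma_{k}$ is a field embedding, so $\sigma_{k}(\eta_{i}) = 0$ forces $\eta_{i} = 0$). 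Thus each $\alpha_{i}\eta_{i}^{2}$ is either $0$ or an element of $\OKPlus{K}$.

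Now I would argue by indecomposability. If two or more indices $i$ had $\eta_{i} \neq 0$, I could split the sum into two nonempty collections, each giving an element of $\OKPlus{K}$; their sum would be a nontrivial decomposition $\beta = \beta_{1} + \beta_{2}$ with $\beta_{1},\beta_{2} \in \OKPlus{K}$, contradicting indecomposability. Since $\beta \succ 0$ is nonzero, at least one summand is nonzero. Hence exactly one $\eta_{i}$ is nonzero, say $\eta_{j}$, and then $\beta = \alpha_{j}\eta_{j}^{2}$, so $\beta/\alpha_{j} = \eta_{j}^{2}$ is a square in $\OK{K}$, as required. There is no serious obstacle here; the only point requiring a moment of care is verifying that each nonzero summand is genuinely totally positive (not merely totally nonnegative), which is the content of the embedding argument above.
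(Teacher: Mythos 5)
Your proof is correct and follows essentially the same route as the paper's: represent $\beta$ as $\sum_i \alpha_i\eta_i^2$, note each summand is zero or totally positive, and invoke indecomposability to force a single nonzero term. You merely spell out the total-positivity of each nonzero summand, which the paper leaves implicit.
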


\begin{proof}
Let $\beta=Q(\gamma_1, \dots, \gamma_n)$ for some $\gamma_1, \dots, \gamma_n \in\OK{K}$. It follows from the indecomposability of $\beta$ that there exists $i\in\{1,\dots, n\}$ such that $\beta=\alpha_i\gamma_i^2$. On the other hand, if $\frac{\beta}{\alpha_i}=\gamma^2$ for an element $\gamma\in\OK{K}$, then $\beta=Q(0, \dots, 0, \gamma, 0, \dots, 0)$, where $\gamma$ is on the $i$-th position.
\end{proof}

\subsection{Escalation over $\BQ23$}
Before starting with the escalation itself, we need to find some indecomposable elements. Let us take a look at the biquadratic field $K=\BQ 2 3$. The ring of integers of this field is
$$\OK{K}=\left[1, \sqrt2, \sqrt3, \frac{\sqrt2+\sqrt6}{2} \right]_{\Z}$$ (as a $\Z$-module).
The fundamental units of the quadratic subfields are $\varepsilon_1 =1+\sqrt2$, $\varepsilon_2 =2+\sqrt3$, $\varepsilon_3 =5+2\sqrt6$. Note that $\varepsilon_2$ and $\varepsilon_3$ are squares in $\BQ23$, because $\varepsilon_2=\left(\frac12\sqrt2+\frac12\sqrt6\right)^2$, $\varepsilon_3=\left(\sqrt2+\sqrt3\right)^2$. Hence, the multiplicative group $\UK{K}$ of units of $\OK{K}$ is generated by $\varepsilon_1, \sqrt{\varepsilon_2}, \sqrt{\varepsilon_3}$, i.e., 
$$\UK{K}=\left\langle 1+\sqrt2, \frac{\sqrt2+\sqrt6}{2}, \sqrt2+\sqrt3 \right\rangle$$ (for reference, see \cite[pg.2]{unitsE}). Note that none of the generators is totally positive; one can see that the only totally positive units in $\BQ23$ are powers and conjugates of powers of 
$$\mu=\left(1+\sqrt2\right)\frac{\sqrt2+\sqrt6}{2}\left(\sqrt2+\sqrt3\right)= 4+\frac52\sqrt2+2\sqrt3+\frac32\sqrt3.$$
It follows from Corollary \ref{Prop:indecompunits} that $\mu$ (and all its powers) are indecomposable.

The indecomposable elements (up to the multiplication by a unit) of the quadratic subfields of $\BQ23$ are  the following ones:
\begin{itemize}
	\item $\Q(\sqrt2)$: $1$, $2+\sqrt2$, $3+2\sqrt2$
	\item $\Q(\sqrt3)$: $1$, $2+\sqrt3$
	\item $\Q(\sqrt6)$: $1$, $3+\sqrt6$, $5+2\sqrt6$
\end{itemize}
It follows from Theorem \ref{Indecomposables} that all indecomposables from the fields $\Q(\sqrt2)$ and $\Q(\sqrt3)$ remain indecomposable in $\BQ23$; for the indecomposables from $\Q(\sqrt6)$, it has to be checked algorithmically using Corollary \ref{Cor:indecomposables}. Because the first step of the method of escalation is to consider the quadratic form $x_1^2$, which represents all squares from $\OKPlus{K}$, we need to cross out all the elements from the list above which are squares in $\BQ23$. Since $3+2\sqrt2=\left(1+\sqrt2\right)^2$, $2+\sqrt3=\left(\frac12\sqrt2+\frac12\sqrt6\right)^2$, $5+2\sqrt6=\left(\sqrt2+\sqrt3\right)^2$, we are left only with two indecomposables: $2+\sqrt2$ and $3+\sqrt{6}$. Furthermore, it holds that 
$$ \frac{2+\sqrt2}{\mu}=\left(1+\frac12\sqrt2-\frac12\sqrt6\right)^2;$$
hence, any quadratic form representing $\mu$ also represents $2+\sqrt2$. On the other hand, one can check (by a computer using Corollary \ref{Cor:indecomposables}) that $\zeta=3-\frac12\sqrt2-\sqrt3+\frac12\sqrt6$ is an indecomposable not arising from any quadratic subfield.

Putting everything together, from the point of view of escalation, we are left with three interesting indecomposable elements:
\begin{align*}
\mu&=4+\frac52\sqrt2+2\sqrt3+\frac32\sqrt6, & \norm{\mu}&=1\\
\sigma&=3+\sqrt6, & \norm{\sigma}&=9\\
\zeta&=3-\frac12\sqrt2-\sqrt3+\frac12\sqrt6, & \norm{\zeta}&=25
\end{align*}
Some other interesting elements can be obtained by multiplying the previous ones:
\begin{align*}
\frac{\sigma}{\mu}&=3-\frac32\sqrt2-\sqrt3+\frac12\sqrt6, & \norm{\frac{\sigma}{\mu}}&=9, \\
\frac{\zeta}{\mu}&=5-\sqrt2-2\sqrt3, & \norm{\frac{\zeta}{\mu}}&=25.
\end{align*}
Note that multiplication by a totally positive unit does not affect indecomposability; therefore, both of these elements are indecomposable.

We are finally prepared to start with the escalation. We will add new variables with coefficients from the two lists above.
\begin{enumerate}
	\item $Q_1(\mathbf{x})=x_1^2$
    
    \item $Q_2(\mathbf{x})=x_1^2+\mu x_2^2+2\alpha x_1 x_2$; \\
    since $\mu$ is indecomposable, the only possibility for the form to be positive definite is when $\alpha=0$. Hence, we have the quadratic form $Q_2(\mathbf{x})=x_1^2+\mu x_2^2$. This form does not represent $\sigma$, because $\sigma$ is indecomposable, and one can check that neither $\sigma$ nor $\frac{\sigma}{\mu}$ are squares.
    
	\item $Q_3(\mathbf{x})=x_1^2+\mu x_2^2+\sigma x_3^2+2\alpha x_1x_3+2\beta x_2x_3$; \\
    the matrix is
    $$Q_3=\begin{pmatrix}
    1 & 0 & \alpha \\
    0 & \mu & \beta \\
    \alpha & \beta & \sigma \\
    \end{pmatrix}$$
   From the indecomposability of $\sigma$ and $\mu\sigma$ follows that the only solutions of $\alpha^2\prec \sigma$ and $\beta^2\prec\mu\sigma$ are $\alpha,\beta=0$. Thus, we have a diagonal quadratic form $Q_3(\mathbf{x})=x_1^2+\mu x_2^2+\sigma x_3^2$. As $\frac{\sigma}{\mu}$ is  indecomposable, and non of the elements $\frac{\sigma}{\mu}$, $\frac{\sigma}{\mu^2}$, $\frac{\sigma}{\mu\sigma}$ is a square, $\frac{\sigma}{\mu}$ is not represented by this quadratic form.
     
    \item $Q_4(\mathbf{x})=x_1^2+\mu x_2^2+\sigma x_3^2+\frac{\sigma}{\mu} x_4^2 +2\alpha x_1x_4+2\beta x_2x_4 + 2\gamma x_3x_4$;\\
    the matrix is
    $$Q_4=\begin{pmatrix}
				1 & 0 & 0 & \alpha \\
				0 & \mu & 0 & \beta \\
				0 & 0 & \sigma & \gamma \\
				\alpha & \beta & \gamma & \frac{\sigma}{\mu} \\
			\end{pmatrix}$$
    \begin{itemize}
        \item Recall that both $\sigma$ and $\frac{\sigma}{\mu}$ are indecomposable. Hence, from the positive definiteness of the submatrices
        	$$\begin{pmatrix}
        	1 & \alpha\\
        	\alpha & \frac{\sigma}{\mu} \\
        	\end{pmatrix}, \ \ \ 
            \begin{pmatrix}
        	\mu & \beta\\
        	\beta & \frac{\sigma}{\mu} \\
        	\end{pmatrix}$$
            follows that the only possibilities are $\alpha=0$ and $\beta=0$.
        
        \item By a computation, the only solution of $\gamma^2\prec\frac{\sigma^2}{\mu}$ is again $\gamma=0$.
     \end{itemize}
Hence, we have a diagonal quadratic form $Q_4(\mathbf{x})=x_1^2+\mu x_2^2+\sigma x_3^2+\frac{\sigma}{\mu} x_4^2$; this quadratic form does not represent the indecomposable element $\zeta$.
\item $Q_5(\mathbf{x})=x_1^2+\mu x_2^2+\sigma x_3^2+\frac{\sigma}{\mu} x_4^2+\zeta x_5^2+2\alpha x_1x_5 +2\beta x_2x_5 +2\gamma x_3x_5+2\delta x_4x_5$;\\
this quadratic form has the matrix
  $$Q_5=\begin{pmatrix}
				1 & 0 & 0 & 0 &  \alpha \\
				0 & \mu & 0 & 0 & \beta \\
				0 & 0 & \sigma & 0 & \gamma \\
				0 & 0 & 0&  \frac{\sigma}{\mu} & \delta \\
                \alpha & \beta & \gamma & \delta & \zeta\\
			\end{pmatrix}.$$
  \begin{itemize}
  	\item Indecomposability of $\zeta$ and $\mu\zeta$ implies that $\alpha,\beta=0$.
    \item A computation gives the following solutions of $\gamma^2\prec\sigma\zeta$: 
  $$\gamma\in\left\{0, \pm\left(-1+\frac12\sqrt2-\frac12\sqrt6\right)\right\}$$
  \item $\delta^2\prec\frac{\sigma\zeta}{\mu}$ has, by a computation, solutions
  \begin{multline*}\delta\in\left\{0, \pm\left( -2+\frac12\sqrt2+\sqrt3-\frac12\sqrt6\right), \pm\left(-2+\frac32\sqrt2+\sqrt3-\frac12\sqrt6 \right), \right. \\
  \left. \pm\left( -1+\frac12\sqrt2+\sqrt3-\frac12\sqrt6\right), \pm\left(-\frac12\sqrt2+\frac12\sqrt6 \right)  \right\} \end{multline*}
  \end{itemize}
Hence, we have the quadratic form $Q_5(\mathbf{x})=x_1^2+\mu x_2^2+\sigma x_3^2+\frac{\sigma}{\mu} x_4^2+\zeta x_5^2+2\gamma x_3x_5+2\delta x_4x_5$ with 3 possibilities for $\gamma$, and 9 possibilities for $\delta$.
\end{enumerate}

We would like to continue the escalation by adding the element $\frac{\zeta}{\mu}$, but one would have to check all of the 27 possible forms $Q_5$ that none of them represents $\frac{\zeta}{\mu}$. Theoretically, this could be done by a calculation based on 
part (b) of Proposition \ref{Prop:BoundsForRepresentability}, but in practice, there are too many possibilities to be checked; for example, if $\gamma=-\frac12\sqrt2+\frac12\sqrt6$ and $\delta=0$, then there are more than $2\cdot10^{10}$ possibilities. Instead of that, let us make the following conclusion:
\begin{proposition}
A classical universal totally positive quadratic form over $\OK{K}$, $K=\BQ23$, must have at least 5 variables.
\end{proposition}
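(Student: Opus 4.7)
The plan is to formalize the escalation procedure carried out above and verify it is exhaustive through the fourth stage: every classical totally positive definite universal quadratic form over $\OK K$ in $n \leq 4$ variables is, up to integral equivalence and reordering of the variables, equal to the form $Q_n$ constructed above. Since $Q_4$ is shown not to represent $\zeta$, this forces any universal form to have at least $5$ variables.

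First I would formalize the inductive step. Suppose $Q$ is universal in $n \leq 4$ variables. Because $Q$ represents $1$, an integral change of basis places $1$ at the $(1,1)$-entry of $Q$. Since $Q$ also represents the indecomposable non-square $\mu$, a second variable is needed whose diagonal entry is (an associate of) $\mu$; by Proposition \ref{Prop:nondiagonalcoef}, the corresponding off-diagonal entry $\alpha$ satisfies $\alpha^2 \prec \mu$. Indecomposability of $\mu$ then forces $\alpha = 0$, since otherwise $\mu = \alpha^2 + (\mu - \alpha^2)$ would be a nontrivial decomposition in $\OKPlus K$. Iterating this argument for the indecomposables $\sigma$ and $\sigma/\mu$, using indecomposability of $\mu\sigma$ (which is an associate of $\sigma$ since $\mu$ is a totally positive unit) to eliminate one of the new off-diagonal entries and a finite enumeration via Proposition \ref{Prop:nondiagonalcoef} to handle the last, we conclude that $Q$ must equal $Q_n$ up to equivalence.

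The main obstacle is confirming that $Q_4 = x_1^2 + \mu x_2^2 + \sigma x_3^2 + \frac{\sigma}{\mu}\, x_4^2$ does not represent $\zeta$. Since $\zeta$ is indecomposable and $Q_4$ is diagonal, by the lemma stated just before the escalation $\zeta$ is represented by $Q_4$ if and only if at least one of $\zeta$, $\zeta/\mu$, $\zeta/\sigma$, $\zeta\mu/\sigma$ is a square in $\OK K$. Each of these elements has norm $25$, so a hypothetical square root $\beta$ would have $\norm{\beta} = \pm 5$; I would bound its four rational coefficients using Lemma \ref{Lemma:TraceSquare}(b) applied with $\beta^2 \preceq \zeta/\alpha_i$, enumerate the resulting finite list of candidates, and verify directly that no candidate squares to any of the four targets.

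Putting this together, no classical totally positive definite universal quadratic form over $\OK K$ has fewer than $5$ variables, proving the proposition.
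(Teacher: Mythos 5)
Your proposal follows essentially the same route as the paper: its proof of this proposition is exactly the escalation computation of Section 5.1, which forces the unique diagonal rank-four escalation $Q_4=x_1^2+\mu x_2^2+\sigma x_3^2+\frac{\sigma}{\mu}x_4^2$ (off-diagonal entries killed by indecomposability of $\mu$, $\sigma$, $\mu\sigma$ and one finite check for $\gamma^2\prec\frac{\sigma^2}{\mu}$) and then uses the lemma on diagonal forms to reduce non-representability of $\zeta$ to checking that none of $\zeta$, $\frac{\zeta}{\mu}$, $\frac{\zeta}{\sigma}$, $\frac{\zeta\mu}{\sigma}$ is a square in $\OK{K}$. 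The only slip is your claim that all four of these have norm $25$: the last two have norm $\frac{25}{9}$ and hence do not even lie in $\OK{K}$, which only makes that verification easier.
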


One may ask if we could get a better result by adding the elements $\mu, \sigma, \frac{\sigma}{\mu}, \zeta, \frac{\zeta}{\mu}$ in a different order. Calculations based on Proposition \ref{Prop:nondiagonalcoef} show that the number of elements $\gamma\in\OK{K}$ satisfying  $\alpha\beta\succeq\gamma^2$   for $\alpha, \beta \in\ \left\{1, \mu, \sigma, \frac{\sigma}{\mu}, \zeta, \frac{\zeta}{\mu}\right\}$, $\alpha\neq\beta$, is as indicated in Table \ref{tab1}.

$$\begin{array}{c|ccccc}
 \#\gamma & 1 &\mu &\sigma & \frac{\sigma}{\mu} & \zeta  \\
 \hline
\mu & 1 &  &  &  & \\
\sigma & 1 & 1 &  &  &   \\
\frac{\sigma}{\mu} & 1 & 1 & 1 &  &  \\
\zeta & 1 & 1 & 3 & 9 &\\
\frac{\zeta}{\mu} & 1 & 1 & 9 & 3 & 5\\
\end{array}$$
\captionof{table}{Number of $\gamma\in\OK{K}$ such that  $\alpha\beta\succeq\gamma^2$.} \label{tab1}

The first 3 rows indicate that we could add $1, \mu, \sigma, \frac{\sigma}{\mu}$ in any order, and we would always end with the diagonal quadratic form $Q_4$. On the other hand, any pair of the elements $\sigma, \frac{\sigma}{\mu}, \zeta, \frac{\zeta}{\mu}$ other than $\sigma, \frac{\sigma}{\mu}$ necessarily leads to a nondiagonal quadratic form.

\subsection{Escalation over $\BQ{6}{19}$}
Let us look at the biquadratic number field $K=\BQ{6}{19}$. We could proceed similarly as in the case of the field $\BQ23$, but rather than that, we will only pick some of the indecomposable elements from the quadratic subfields and skip most of the computations. Denote
$$
\alpha_1 = 1, \ \ 
\alpha_2=5+2\sqrt6, \ \ 
\alpha_3=3+\sqrt{6}, \ \ 
\alpha_4=9+2\sqrt{19}, \ \ 
\alpha_5=11+\sqrt{114}, \ \ 
\alpha_6=5+\sqrt{19}.
$$
All of these elements are indecomposable in the appropriate quadratic field; note that the indecomposablity in $\BQ{6}{19}$ of all of them but $\alpha_5$ follows from Theorem \ref{Indecomposables}, and the indecomposability of $\alpha_5$ can be checked by computer using Corollary \ref{Cor:indecomposables}. Let us write the number of elements $\gamma\in\OKPlus{K}$ such that $\gamma^2\preceq\alpha_i\alpha_j$ for all pairs $i\neq j$ in a table (see Table \ref{tab2}).
$$\begin{array}{c|ccccc}
 \#\gamma & 1 & \alpha_2 &\alpha_3 & \alpha_4 & \alpha_5  \\
 \hline
\alpha_2 &1&  &  &  &  \\
\alpha_3 &1 & 1 &  &  &    \\
\alpha_4 & 1 & 1 & 1 &  &   \\
\alpha_5 & 1 & 1 & 1 & 1 &  \\ 
\alpha_6 & 1 & 1 & 1 & 3 & 1  \\
\end{array}$$
\captionof{table}{Number of $\gamma\in\OK{K}$ such that  $\alpha_i\alpha_j\succeq\gamma^2$.} \label{tab2}
The first four rows show that, if proceeding by escalation, we would get a diagonal quadratic form $Q_5(\mathbf{x})=x_1^2+\alpha_2 x_2^2+\alpha_3 x_3^2 +\alpha_4 x_4^2+\alpha_5 x_5^2$, which clearly does not represent the indecomposable element $\alpha_6$, as $\frac{\alpha_6}{\alpha_i}$ is not square in $\BQ{6}{19}$ for any $i=1, \dots, 5$. Hence, we get the following result:

\begin{proposition}
A classical universal totally positive quadratic form over $\OK{K}$, $K=\BQ{6}{19}$, must have at least 6 variables.
\end{proposition}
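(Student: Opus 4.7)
The plan is to follow the escalation method exactly as in the $\BQ23$ example: run escalation through the first five indecomposables $\alpha_1,\dots,\alpha_5$, argue that it is forced to produce the diagonal form $Q_5(\mathbf{x})=x_1^2+\alpha_2 x_2^2+\alpha_3 x_3^2+\alpha_4 x_4^2+\alpha_5 x_5^2$, and verify that this form fails to represent the indecomposable $\alpha_6$. Since any classical universal totally positive definite quadratic form must represent $\alpha_1,\dots,\alpha_6$, and the escalation uses all five variables on $\alpha_1,\dots,\alpha_5$ alone, at least six variables are necessary.

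First I would verify that each $\alpha_i$ is indecomposable in $\OK{K}$. For $\alpha_2,\alpha_3\in\Q(\sqrt 6)$, part (a) of Theorem \ref{Indecomposables} applies once the numerical hypothesis is checked: the continued fraction of $\sqrt 6$ is $[2;\overline{2,4}]$, so $M_6=2$ and $\sqrt{114}>2\sqrt 6$. For $\alpha_4,\alpha_6\in\Q(\sqrt{19})$, part (b) gives the conclusion immediately, since the triple $\{6,19,114\}$ falls into case~1. For $\alpha_5\in\Q(\sqrt{114})$ Theorem \ref{Indecomposables} does not apply, as $\sqrt 6<\sqrt{114}$, so I would instead invoke Corollary \ref{Cor:indecomposables} and run a bounded search using the inequalities (\ref{Eq:estimates1}).

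Next I would execute the escalation itself. Starting from $Q_1=x_1^2$, at each step $k=2,3,4,5$ the previous form $Q_{k-1}$ is diagonal with coefficients among $\{1,\alpha_2,\dots,\alpha_{k-1}\}$, so by the representability lemma stated just before the $\BQ23$ escalation, $Q_{k-1}$ represents the indecomposable $\alpha_k$ iff $\alpha_k/\alpha_i$ is a square in $\OK{K}$ for some $i<k$; a norm check (assisted by Lemma \ref{squares} when convenient) rules this out, so $\alpha_k$ must be added with a new diagonal coefficient. The new cross terms $2\beta_{ik}x_ix_k$ satisfy $\beta_{ik}^2\prec\alpha_i\alpha_k$ by Proposition \ref{Prop:nondiagonalcoef}, and the first four rows of Table \ref{tab2} certify that only $\beta_{ik}=0$ meets this condition, so $Q_k$ remains diagonal. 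After step five we arrive at $Q_5$, and applying the same lemma to $\alpha_6$ reduces non-representability to checking that $\alpha_6/\alpha_i$ is not a square in $\OK{K}$ for $i\in\{1,\dots,5\}$, which is a short verification.

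The main obstacle is the computational content of Table \ref{tab2}: proving that every off-diagonal cell in its first four rows contains only $\gamma=0$ amounts to solving $\gamma^2\preceq\alpha_i\alpha_j$ with $\gamma=a+b\sqrt 6+c\sqrt{19}+d\sqrt{114}\in\OK{K}$, which by Proposition \ref{Prop:nondiagonalcoef} is a finite search with explicit bounds on the rational coefficients of $\gamma$. This must be executed by computer, as in the $\BQ23$ case. The non-square verifications, both for $\alpha_k/\alpha_i$ during the escalation and for $\alpha_6/\alpha_i$ at the end, are comparatively mild and can typically be ruled out by norm considerations alone, with Lemma \ref{squares} available as a fallback to constrain the shape of any hypothetical square root.
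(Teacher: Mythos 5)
Your proposal is correct and follows essentially the same route as the paper: escalation through $\alpha_1,\dots,\alpha_5$ forced to stay diagonal by the (computer-verified) first four rows of Table \ref{tab2}, indecomposability of $\alpha_2,\alpha_3,\alpha_4,\alpha_6$ via Theorem \ref{Indecomposables} and of $\alpha_5$ via Corollary \ref{Cor:indecomposables}, and non-representation of $\alpha_6$ by $Q_5$ because no $\alpha_6/\alpha_i$ is a square. The numerical hypotheses you check (case 1 for the triple $\{6,19,114\}$, $M_6=2$, $\sqrt{114}>2\sqrt6$) are the right ones.
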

\section*{Acknowledgment}
We would like to thank V\'{\i}t\v{e}zslav Kala for encouraging us to start a student seminar and for his advice.\\


\end{document}